\def\MRnum#1\empty{#1}
\renewcommand{\MRhref}[2]{%
  \href{http://www.ams.org/mathscinet-getitem?mr=#1}{#2}
}
\renewcommand{\MR}[1]{
  \relax\ifhmode\unskip\space\fi
  \MRhref{\MRnum#1\empty}{\texttt{\Tiny[MR\MRnum#1\empty]}}
}
\newcommand{\arxiv}[1]{\href{http://arxiv.org/abs/#1}{\scriptsize arXiv: #1}}
\newcommand{\etalchar}[1]{$^{#1}$}
\newcommand{\e}{\operatorname{e}}
\newcommand{\im}{\mathrm{i}}
\newcommand{\E}{\mathbb{E}}
\newcommand{\N}{\mathbb{N}}
\newcommand{\R}{\mathbb{R}}
\newcommand{\Bc}{\mathcal{B}}
\newcommand{\Ec}{\mathcal{E}}
\newcommand{\Fc}{\mathcal{F}}
\newcommand{\Lc}{\mathcal{L}}
\renewcommand{\Mc}{\mathcal{M}}
\newcommand{\Pc}{\mathcal{P}}
\newcommand{\Es}{\mathscr{E}}
\newcommand{\Gs}{\mathscr{G}}
\newcommand{\Torus}{{\mathbb{T}_2}}
\newcommand{\uno}{\mathbbm{1}}
\newcommand{\Prob}{\mathbb{P}}
\newcommand{\eqdef}{\vcentcolon=}
\newcommand{\qedef}{=\vcentcolon}
\newcommand{\scalar}[1]{\langle #1 \rangle}
\newcommand{\memo}[1]{
  \framebox{\tiny\text{\kern-2pt\textsf{\ensuremath{#1}}}\kern-2pt}
  \xspace
}
\newtheorem{theorem}{Theorem}[section]
\newtheorem{lemma}[theorem]{Lemma}
\newtheorem{proposition}[theorem]{Proposition}
\theoremstyle{definition}
\theoremstyle{remark}
\newtheorem{remark}[theorem]{Remark}
\numberwithin{equation}{section}
\begin{document}
  \title[Generalized models of point vortices]
    {Limit theorems and fluctuations for point vortices of generalized Euler equations}
  \author[C. Geldhauser]{Carina Geldhauser}
    \address{Chebyshev Laboratory of Sankt Petersburg State University,
      Faculty for Mathematics and Mechanics,
      Vasilievsky Island 14th line No. 29,
      Sankt Peterburg, Russia, 199178}
    \email{\href{mailto:k.geldhauser@spbu.ru}{k.geldhauser@spbu.ru}}
    \urladdr{\url{http://www.cgeldhauser.de}}
  \author[M. Romito]{Marco Romito}
    \address{Dipartimento di Matematica, Universit\`a di Pisa, Largo Bruno Pontecorvo 5, I--56127 Pisa, Italia }
    \email{\href{mailto:marco.romito@unipi.it}{marco.romito@unipi.it}}
    \urladdr{\url{http://people.dm.unipi.it/romito}}
  \date{October 30, 2018}
  \begin{abstract}
    We prove a mean field limit, a law of large numbers and
    a central limit theorem for a system of point vortices
    on the 2D torus at equilibrium with positive temperature.
    The point
    vortices are formal solutions of a class of
    equations generalising the Euler equations, and
    are also known in the literature as generalised
    inviscid SQG. The mean field limit is a steady
    solution of the equations, the CLT limit is a stationary
    distribution of the equations.
  \end{abstract}
\maketitle
%%
%%%%%%%%%%%%%%%%%%%%%%
\section{Introduction}

The paper analyses the mean field limit and the corresponding
fluctuations for the point vortex dynamics, at equilibrium
with positive temperature, arising from a class of equations
generalising the Euler equations. More precisely,
we consider the family of models
\[
  \partial_t\theta + u\cdot\nabla\theta
    = 0,
\]
on the two dimensional torus $\Torus$ with
periodic boundary conditions and zero spatial
average. Here $u=\nabla^\perp(-\Delta)^{-\frac{m}2}\theta$
is the velocity, and $m$ is a parameter.
When $m=2$, the model corresponds to the
Euler equations, when $m=1$ this is the
inviscid surface quasi-geostrophic
(briefly, SQG).

As in the case of Euler equations,
a family of \emph{formal} solutions is
given by point vortices, namely
measure solutions described by
\[
  \sum_{j=1}^N \gamma_j\delta_{X_j(t)},
\]
where $X_1,X_2,\dots,X_N$ are vortex positions
and $\gamma_1,\gamma_2,\dots,\gamma_N$
are vortex intensities. Positions evolve
according to \eqref{e:motion}, and intensities
are constant by a generalized version of
Kelvin's theorem. This evolution is
Hamiltonian with Hamiltonian \eqref{e:hamiltonian},
and has a family of invariant
distributions \eqref{e:mu} indexed
by a parameter $\beta$. Unfortunately
when $m<2$ the invariant distributions,
written in terms of a density which is
the exponential of the Hamiltonian,
do not make sense since
the Green function of the fractional
Laplacian $(-\Delta)^{\frac{m}2}$
has a singularity
which is too strong.

Nevertheless, the main aim of the paper
is to realize the program developed
in \cite{CagLioMarPul1992,CagLioMarPul1995,Lio1998,BodGui1999}
for point vortices for the Euler equations.
To this aim we introduce a regularization
of the Green function. At the level
of the regularized problem all
statements we are interested in (mean field
limit, analysis of fluctuations)
are not difficult, since the interaction
among vortices is bounded.
We recover the original problem
in the limit of infinite vortices,
since when the number of vortices
$N$ increases to $\infty$,
we choose the
regularization parameter
$\epsilon$ so that it goes
at the
same time to $0$. To
ensure the validity of
our result though, the
speed of convergence of
$\epsilon=\epsilon(N)$
must be at least
logarithmically
slow in terms of $N$.

Under the conditions
$\beta>0$ and $m<2$, and when
$\epsilon(N)\downarrow0$,
we prove propagation of chaos,
namely vortices decorrelate and
in the limit are independent.
A law of large numbers holds
and, in terms of $\theta$,
the limit is a stationary
solution of the original
equation. Likewise, a
central limit theorem
holds. The limit Gaussian
distribution for the
$\theta$ variable turns
out to be a statistically
stationary solution
of the equations.
\bigskip

The paper is organized as follows. In Section~\ref{s:model}
we introduce the model with full details, we give some
preliminary results and we prepare the framework to
state the main results. Section~\ref{s:main} contains
the main results, as well as some consequences and
additional remarks. Finally, Section~\ref{s:proofs}
is devoted to the proof of the main results.

We conclude this introduction with a list of
notations used throughout the paper.
We denote by $\Torus$ the two dimensional
torus, and by $\ell$ the normalized Lebesgue
measure on $\Torus$. Given a metric space $E$,
we shall denote by $C(E)$ the space of
continuous functions on $E$, and by
$\Pc(E)$ the set of probability measures
on $E$. If $x\in E$, then $\delta_x$
is the Dirac measure on $x$.
Given a measure $\mu$ on $E$,
we will denote by
$\mu(F)=\scalar{F,\mu}=\int F(x)\,\mu(dx)$
the integral of a function $F$ with respect
to $\mu$. Sometimes we will also use
the notation $\E_\mu[F]$. We will use
the operator $\otimes$ to denote the
product between measures.
We shall denote by $\lambda_1,\lambda_2,\dots$
the eigenvalues in non-decreasing order,
and by $e_1,e_2,\dots$ the corresponding
orthonormal basis of eigenvectors
of $-\Delta$, where $\Delta$ is the
Laplace operator on $\Torus$ with periodic
boundary conditions and zero spatial average.
With these positions,
if $\phi=\sum_k \phi_k e_k$, then
the fractional Laplacian is defined as
\[
  (-\Delta)^{\frac\alpha2}\phi
    = \sum_{k=1}^\infty \lambda_k^{\frac\alpha2}\phi_k e_k.
\]
%%
%%%%%%%%%%%%%%%%%%%%%%%%%%%%%
\subsection*{Acknowledgments}

The authors wish to thank Franco Flandoli and Francesco Grotto
for several fruitful conversations on the subject, and
for having pointed out the paper \cite{BenPicPul1987}.

The first author acknowledges the hospitality and support
of the Mathematisches Forschungsinstitut Oberwolfach
through a Oberwolfach-Leibniz Fellowship.

The second author acknowledges the partial support
of the University of Pisa, through project PRA 2018\_49.
%%
%%
%%
%%%%%%%%%%%%%%%%%%%%%%%%%%%%%%%%%%
\section{The model}\label{s:model}

Consider the family of models,
\begin{equation}\label{e:euler}
  \partial_t\theta + u\cdot\nabla\theta
    = 0,
\end{equation}
on the torus with periodic boundary conditions
and zero spatial average, where the velocity
$u=\nabla^\perp\psi$,
and the stream function $\psi$ is
solution to the following problem,
\[
  (-\Delta)^{\frac{m}{2}}\psi
    = \theta,
\]
with periodic boundary conditions and zero spatial average.
Here $m$ is a parameter. The case $m=2$ corresponds
to the Euler equation in vorticity formulation,
$m=1$ is the inviscid surface quasi-geostrophic
equation (briefly, SQG), and for a general value
is sometimes known in the literature as the
inviscid \emph{generalized surface quasi-geostrophic}
equation. Here we will consider values $m<2$
of the parameter.
%%
%%%%%%%%%%%%%%%%%%%%%%%%%%%%%%%%%%%%%%
\subsection{Generalities on the model}

We start by giving a short introduction to
the main features of the model~\eqref{e:euler}.
%%
%%%%%%%%%%%%%%%%%%%%%%%%%%%%%%%%%%%%%%%%%%%%%%%%%%%%
\subsubsection{Existence and uniqueness of solution}

The inviscid SQG has been derived in meteorology to model
frontogenesis, namely the production of fronts due to
tightening of temperature gradients and is
an active subject of research. See
\cite{ConMajTab1994,HelPieSwa1994,HelPieGarSwa1995}
(see also \cite{CorFefRod2004,Rod2005}) for the
first mathematical and geophysical studies about
strong fronts.
The generalized version of the equations bridges
the cases of Euler and SQG
and it is studied to understand the mathematical
differences between the two cases.

As it regards existence, uniqueness and regularity
of solutions, a local existence result is known,
namely data with sufficient smoothness give
local in time unique solutions with the same
regularity of the initial condition, see for
instance \cite{ChaConCorGanWu2012}. Unlike
the Euler equation, it is not known if
the inviscid SQG (as well as its generalized
version) has a global solution. 
Actually, there is numerical evidence,
see \cite{CorFonManRod2005}, of
emergence of singularities in
the generalized SQG, for $m\in[1,2)$.
On the other hand see \cite{CorGomIon2017}
for classes of global solutions.
Finally, \cite{ChaConWu2011} presents
a regularity criterion for classical
solutions.

The state is different if one turns to
weak solutions. Indeed existence of
weak solutions is known since
\cite{Res1995}, see also
\cite{Mar2008}. For existence
of weak solution for the generalized
SQG model one can see 
\cite{ChaConCorGanWu2012}.
Global flows of weak solution
with a (formal) invariant measure
(corresponding to the measure
in~\eqref{e:invariant} with $\beta=0$)
as initial condition has been
provided in \cite{NahPavStaTot2017}.
%%
%%%%%%%%%%%%%%%%%%%%%%%%%%%%%%%%%%%%%%%%%%%%%%%%%%%%%%%
\subsubsection{Invariant quantities}\label{s:invariant}

We turn to some simple (and in general
formal, but that can be made rigorous
on classical solutions) properties of the equation.
As in the case of Euler equations,
equation \eqref{e:euler} can be solved by means
of characteristics, in the sense that
if $\theta$ is solution of \eqref{e:euler}
and $u=\nabla^\perp\theta$,
\[
  \begin{cases}
    \dot{X}
      = u(t,X_t),\\
    X(0)
      = x,
  \end{cases}
\]
then, at least formally, 
\[
  \frac{d}{dt}\theta(t,X_t)
    = \partial_t \theta(t, X_t) 
      + \dot{X}_t \cdot \nabla \theta(t, X_t)
    =  (\partial_t \theta + u \cdot \nabla \theta )(t, X_t)
    =  0,
\]
therefore $\theta(t, X_t) = \theta(0, x)$. 
This formally ensures conservation of the sign and of
the magnitude ($L^\infty$ norm) of $\theta$.

It is not difficult to see that
\eqref{e:euler} admits an infinite
number of conserved quantities,
for instance of $L^p$ norms of
$\theta$.
We are especially interested in
the quantity (that in the case
$m=2$ is the enstrophy),
\[ 
  \|\theta(t)\|_{L^2}^2
    = \int_\Torus |\theta(t,x)|^2\,d\ell
\]
and in the quantity,
\[
  \int_\Torus \theta(t,x)\psi(t,x)\,d\ell
    = \|(-\Delta)^{-\frac{m}4}\theta\|_{L^2(\ell)}^2.
\]
Here, unlike the case $m=2$, this conserved
quantity is not the kinetic energy.
Formally, corresponding to these conserved
quantities, in analogy with the invariant
measures of the Euler equations
\cite{AlbCru1990}, one can consider
the invariant measures
\begin{equation}\label{e:invariant}
  \mu_{\beta,\alpha}(d\theta)
    = \frac1{Z_{\beta,\alpha}}\e^{-\beta\|(-\Delta)^{-\frac{m}4}\theta\|^2
      -\alpha\|\theta(t)\|_{L^2}^2}\,d\theta,
\end{equation}
classically interpreted as Gaussian
measures with suitable covariance
(see Remark~\ref{r:invariant}).
%%
%%%%%%%%%%%%%%%%%%%%%%%%%%%%%%%%%%%%
\subsection{The point vortex motion}

The point vortex motion is a powerful
point of view to understand some of the
phenomenological interesting properties
of solutions of the Euler equations.
Mathematical results about the general dynamics
of point vortices \cite{MarPul1994}
and about the connection with the
equations \cite{Sch1996} are classical.
The statistical mechanics approach
to the description of point vortices,
the central topic of this paper,
dates back to some of the intuitions
in the celebrated paper of Onsager
\cite{Ons1949}, and later developed
in the physical literature
\cite{JoyMon1973,FroRue1982,EyiSpo1993,Kie1993}.
Results of mean field type are obtained
in \cite{CagLioMarPul1992,CagLioMarPul1995,Lio1998}.
Mean field limit results of point vortices
with random intensities can be found in
\cite{Ner2004,Ner2005,KieWan2012}.
The analysis of fluctuations
can be found in \cite{BenPicPul1987,BodGui1999}
and in the recent \cite{GroRom2018}.

The central topic of this paper is
to give results about the mean field
limit of a system of point vortices
governed by \eqref{e:euler}.
To be more detailed, if one
considers a configuration of $N$ point vortices
located at $x_1,x_2,\dots,x_N$, with respective
intensities $\gamma_1,\gamma_2,\dots,\gamma_N$,
that is the measure
\[
  \theta(0)
    = \sum_{j=1}^N \gamma_j\delta_{X_j}
\]
as the initial condition of
\eqref{e:euler}, one can check
that, formally, the solution
evolves as a measure of the same
kind, where the ``intensities''
$\gamma_j$ remain constant (a generalized
version of Kelvin's theorem about
the conservation of circulation),
and where the vortex positions
evolve according to the system
of equations
\begin{equation}\label{e:motion}
  \begin{cases}
    \dot X_j
      = \sum_{k\neq j}\gamma_k\nabla^\perp G_m(X_j,X_k),\\
    X_j(0)
      = x_j,
  \end{cases}
  \qquad j=1,2,\dots,N,
\end{equation}
where $G_m$ is the Green function
of the operator $(-\Delta)^{\frac{m}2}$
on the torus with periodic boundary conditions
and zero spatial average. The effective connection
between the equations and the point vortex
dynamics is not yet clear and will
be discussed elsewhere. See also
\cite{FlaSaa2018}.

The motion of vortices is described
by the Hamiltonian
\begin{equation}\label{e:hamiltonian}
  H_N(\gamma^N,X^N)
    = \frac12\sum_{j\neq k}\gamma_j\gamma_k G_m(X_j,X_k),
\end{equation}
where $X^N=(X_1,X_2,\dots,X_N)$ and
$\gamma^N=(\gamma_1,\gamma_2,\dots,\gamma_N)$.

A natural invariant distribution for the
Hamiltonian dynamics \eqref{e:motion}
should be the measure
\begin{equation}\label{e:mu}
  \mu_\beta^N(dX^N)
    = \frac1{Z_\beta^N}
      \e^{-\beta H_N(X^N,\gamma^N)}d\ell^{\otimes N},
\end{equation}
where here and throughout the paper we denote
by $\ell$ the normalized Lebesgue measure
on $\Torus$.
Due to the singularity of the Green function
on the diagonal,
which is of order $G_m(x,y)\sim|x-y|^{m-2}$,
the density above is not integrable and thus
the measure $\mu^N_\beta$ does not make sense.
%%
%%%%%%%%%%%%%%%%%%%%%%%%%%%%%%%%%%%
\subsection{The regularized system}

To overcome this difficulty, we consider
a regularization of the Green function.
To define the regularization,
notice that we can represent the Green
function for the fractional Laplacian
through the eigenvectors,
\[
  G_m(x,y)
    = \sum_{k=1}^\infty \lambda_k^{-\frac{m}2}e_k(x)e_k(y).
\]
Given $\epsilon>0$, consider the following regularization
of the Green function,
\begin{equation}\label{eq:greenreg}
  G_{m,\epsilon}(x,y)
    = \sum_{k=1}^\infty \lambda_k^{-\frac{m}{2}}\e^{-\epsilon\lambda_k}e_k(x)e_k(y).
\end{equation}
Here, we have regularized the fractional Laplacian
so that the new operator $D_{m,\epsilon}$ reads
$D_{m,\epsilon}=(-\Delta)^{m/2}\e^{-\epsilon \Delta}$
and the eigenvalues change from $\lambda^{m/2}$ to
$\lambda^{m/2}\e^{\epsilon\lambda}$.
We remark that, as long as $G_{m,\epsilon}$ is translation
invariant and non-singular on the diagonal, the exact form
of the regularization is not essential for our main results
given in Section~\ref{s:main}.

If we replace $G_m$ by $G_{m,\epsilon}$ in \eqref{e:motion},
the motion is still Hamiltonian with Hamiltonian
$H_N^\epsilon$ given
by \eqref{e:hamiltonian}, with $G_m$ replaced by
$G_{m,\epsilon}$, namely
\[
  H_N^\epsilon(\gamma^N,X^N)
    = \frac12\sum_{j\neq k}\gamma_j\gamma_k G_m^\epsilon(X_j,X_k).
\]
In terms of invariant distributions,
we want to consider a problem slightly more general and
we shall randomize the intensities of vortices.
The ``quenched'' case, namely the case with fixed
intensities, will follow as a by-product, see
Remark~\ref{r:quenched}.

Let $\nu$ be a probability measure on
the real line with support on a compact
set $K_\nu\subset\R$\footnote{In other
  words we assume that intensities are
  bounded in size by a deterministic constant.
  Notice that in the case $m=2$ this
  is in a way a requirement,
  see \cite{CagLioMarPul1992,CagLioMarPul1995,Lio1998}.}.
The measure $\nu$ will be the \emph{prior} distribution
on vortex intensities. A natural invariant distribution
for the regularized Hamiltonian dynamics
with random intensities is
\begin{equation}\label{e:reg_mu}
  \mu_{\beta,\epsilon}^N(d\gamma^N,dX^N)
    = \frac1{Z_{\beta,\epsilon}^N}
      \e^{-\frac{\beta}{N} H_N^\epsilon(\gamma^N,X^N)}
      \,d\ell^{\otimes N}\,d\nu^{\otimes N},
\end{equation}
where $\ell$ is the normalized Lebesgue measure
on $\Torus$ and $Z_{\beta,\epsilon}^N$ is the
normalization factor. In the above formula for the measure
we have scaled the parameter $\beta$ by $N^{-1}$,
in analogy with the case $m=2$.
Indeed, for the Euler equation
there is no nontrivial thermodynamic limit
\cite{FroRue1982}, and the interesting
regime that provides interesting
results is the mean field limit.
See \cite{MarPul1994} for a physical motivation,
and \cite{CagLioMarPul1992,CagLioMarPul1995,Lio1998}
for the related mathematical results.
%%
%%%%%%%%%%%%%%%%%%%%%%%%%%%%%%%%%%%%%%%%%%%%%%%%%%%%%%%%%%%%%%%%%%%%%%%%%%%%%%%%%%
\subsubsection{Mean field limit of the regularized system}\label{s:reg_mean_field}

The problem of finding the limit of measures
$(\mu_{\beta,\epsilon}^N)_{N\geq1}$ is
trivial, since the interaction among particles
is bounded, and we only give an outline of
the results.

The goal of this section is to show the existence
of limit points for the measure
\eqref{e:reg_mu} and to characterize them. 
We look at convergence of the distributions
of a finite number of vortices.
Therefore, we work with the so-called
\emph{correlation functions}, defined by
\[
  \rho_{\beta,\epsilon}^{N,k}(\gamma^k,X^k)
    \eqdef\int_{\Torus^{N-k}}\mu_{\beta,\epsilon}^N(d\gamma^{N-k},dX^{N-k}),
\]
namely the distribution of the first $k$ vortices.
This is not restrictive, by exchangeability
of the measures~\eqref{e:reg_mu}.

In the following result we summarise the relevant
estimates and therefore deduce weak convergence
of the correlation functions.
\begin{lemma}
  There is a number $C>0$ that depends (only)
  on $\beta$ and $\epsilon$, but not on $k\geq1$
  and $N\geq1$, such that the following bounds hold,
  \[
    \begin{aligned}
      &Z^N_{\beta,\epsilon}
        \leq C^N,\\
      &\rho_{\beta,\epsilon}^{N,k}
        \leq C^k\e^{-\frac{\beta}{N}H_k^\epsilon(\gamma^k,X^k)},\\
      &\|\rho_{\beta,\epsilon}^{N,k}\|_{L^p}
        \leq C^k,
          \qquad p\in[1,\infty).
    \end{aligned}
  \]
  In particular, there is a sub-sequence $(N_j)_{j\geq1}$
  such that
  \[
    \rho_{\beta,\epsilon}^{N_j,k}
      \rightharpoonup \rho_{\beta,\epsilon}^k
  \]
  weakly in $L^p((K_\nu\times\Torus)^k)$, for all
  $k\geq1$ and $p\in[1,\infty)$.
\end{lemma}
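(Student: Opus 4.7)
The plan is to exploit that, after regularization, the Green function $G_{m,\epsilon}$ given by \eqref{eq:greenreg} is a smooth, bounded, symmetric kernel on $\Torus\times\Torus$, and the vortex intensities are confined to the compact set $K_\nu$. Set $M_\epsilon\eqdef\|G_{m,\epsilon}\|_{L^\infty(\Torus\times\Torus)}$ and $R\eqdef\sup_{\gamma\in K_\nu}|\gamma|$. Then from the very definition \eqref{e:hamiltonian} of $H_N^\epsilon$ one has the deterministic bound
\[
  |H_N^\epsilon(\gamma^N,X^N)|
    \leq \tfrac12 N(N-1)R^2 M_\epsilon,
\]
and hence, for every configuration, $\e^{-\beta(N-1)R^2M_\epsilon/2}\leq\e^{-\frac{\beta}{N}H_N^\epsilon}\leq\e^{\beta(N-1)R^2M_\epsilon/2}$. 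Since $\ell^{\otimes N}$ and $\nu^{\otimes N}$ are probability measures, integrating in \eqref{e:reg_mu} immediately yields the two-sided estimate $c^N\leq Z^N_{\beta,\epsilon}\leq C^N$ for suitable constants $0<c\leq C$ depending only on $\beta$, $\epsilon$, $R$, $M_\epsilon$. This gives the first claim.

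For the pointwise bound on $\rho_{\beta,\epsilon}^{N,k}$, I would decompose the Hamiltonian, writing
\[
  H_N^\epsilon(\gamma^N,X^N)
    = H_k^\epsilon(\gamma^{1:k},X^{1:k})
      + I_k^{N-k}
      + H_{N-k}^\epsilon(\gamma^{k+1:N},X^{k+1:N}),
\]
where $I_k^{N-k}=\sum_{j\leq k<l}\gamma_j\gamma_l G_{m,\epsilon}(X_j,X_l)$ is the cross interaction between the two blocks. The cross term satisfies $|I_k^{N-k}|\leq k(N-k)R^2M_\epsilon$, while $|H_{N-k}^\epsilon|\leq\tfrac12(N-k)(N-k-1)R^2M_\epsilon$; after dividing by $N$ these give respectively $\e^{\beta kR^2M_\epsilon}$ and $\e^{\beta(N-k-1)R^2M_\epsilon/2}$ as upper bounds on the corresponding exponentials. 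Combined with the lower bound $Z^N_{\beta,\epsilon}\geq c^N$ already established, the ratio in the definition of $\rho_{\beta,\epsilon}^{N,k}$ is bounded by $C^k\,\e^{-\frac{\beta}{N}H_k^\epsilon(\gamma^k,X^k)}$, which is the second inequality; note that in the constant $C$ the exponential factor $\e^{\beta(N-k-1)R^2M_\epsilon/2}$ is compensated by the lower bound $c^N$ on $Z^N_{\beta,\epsilon}$, so nothing blows up in $N$.

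The $L^p$ bound is then immediate: applying the same trivial bound on $H_k^\epsilon$ gives $\rho_{\beta,\epsilon}^{N,k}\leq C^k$ pointwise, and since $(K_\nu\times\Torus)^k$ has finite $\ell^{\otimes k}\otimes\nu^{\otimes k}$-mass the $L^p$ norm is controlled by a constant of the form $C^k$ for every $p\in[1,\infty)$.

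Finally, weak compactness: fix $k\geq1$ and any $p\in(1,\infty)$. Boundedness of $(\rho_{\beta,\epsilon}^{N,k})_N$ in the reflexive space $L^p((K_\nu\times\Torus)^k)$ yields, by Banach--Alaoglu, a weakly convergent subsequence. A standard diagonal extraction over $k\in\N$ and over a dense sequence of exponents produces a single subsequence $(N_j)$ along which convergence holds for every $k$ and every $p\in(1,\infty)$; weak convergence in $L^1$ on the finite-measure space $(K_\nu\times\Torus)^k$ follows from the uniform $L^p$ bound for any $p>1$ via Dunford--Pettis. There is no real obstacle here since the regularization has made the interaction bounded; the only bookkeeping point to watch is to verify that the cross and block contributions in the splitting of $H_N^\epsilon$ recombine with the denominator $Z^N_{\beta,\epsilon}$ to leave constants that grow only as $C^k$, independently of $N$.
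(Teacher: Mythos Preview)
Your overall strategy---exploit boundedness of $G_{m,\epsilon}$ and of the intensities---is the right one, and it is precisely what the paper has in mind: the authors omit the proof entirely, pointing to \cite{Ner2004} and noting that the argument there simplifies because the interaction is bounded. The first bound, the $L^p$ bound, and the weak compactness argument are fine.

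There is, however, a genuine gap in your derivation of the pointwise estimate on $\rho_{\beta,\epsilon}^{N,k}$. You bound the block $H_{N-k}^\epsilon$ trivially, obtaining a factor $\e^{|\beta|(N-k-1)R^2M_\epsilon/2}$, and then claim this is ``compensated by the lower bound $c^N$ on $Z^N_{\beta,\epsilon}$''. But $Z^N_{\beta,\epsilon}\geq c^N$ with $c=\e^{-|\beta|R^2M_\epsilon/2}$ means $1/Z^N_{\beta,\epsilon}\leq \e^{|\beta|(N-1)R^2M_\epsilon/2}$, so the two factors \emph{multiply} rather than cancel: the product is of order $\e^{|\beta|NR^2M_\epsilon}$, which blows up in $N$ and certainly is not $C^k$.

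The standard fix (this is the Messer--Spohn/Neri argument) is not to bound $H_{N-k}^\epsilon$ at all. Instead, bound the cross term and the $k$-block from \emph{below} inside the definition of $Z^N_{\beta,\epsilon}$: since $|H_k^\epsilon+I_k^{N-k}|\leq \tfrac12 k(k-1)R^2M_\epsilon+k(N-k)R^2M_\epsilon$, dividing by $N$ gives at most $|\beta|kR^2M_\epsilon$ in the exponent, hence
\[
  Z^N_{\beta,\epsilon}
    \geq \e^{-|\beta|kR^2M_\epsilon}
      \int \e^{-\frac{\beta}{N}H_{N-k}^\epsilon}\,d\ell^{\otimes(N-k)}\,d\nu^{\otimes(N-k)}.
\]
Combining this with your upper bound $\e^{-\frac{\beta}{N}I_k^{N-k}}\leq \e^{|\beta|kR^2M_\epsilon}$ in the numerator, the integral $\int\e^{-\frac{\beta}{N}H_{N-k}^\epsilon}$ cancels exactly between numerator and denominator, leaving
\[
  \rho_{\beta,\epsilon}^{N,k}
    \leq \e^{2|\beta|kR^2M_\epsilon}\,\e^{-\frac{\beta}{N}H_k^\epsilon(\gamma^k,X^k)},
\]
which is the desired bound with $C=\e^{2|\beta|R^2M_\epsilon}$.
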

The proof of this lemma is a simpler version, due
to the boundedness of the Green function, of
corresponding results from \cite{Ner2004}, and is
therefore omitted. 

To characterize the limit, consider the
free energy functional on
measures on $(K_\nu\otimes\Torus)^N$,
\[
  \Fc_N^\epsilon(\mu)=
    \Ec(\mu|\nu^{\otimes N}\otimes\ell^{\otimes N})
      + \frac{\beta}{N}\int H_N^\epsilon(\gamma^N,X^N)\mu(d\gamma^N,dX^N)
\]
where $\Ec$ is the relative entropy. It is not difficult to see that
$\mu_{\beta,\epsilon}^N$ is the unique minimiser of the
free energy. This can be carried to the limit. By
convexity and subadditivity, we can define the
entropy
\[
  \Ec_\star(\mu)
    = \lim_{N\to\infty}\frac1N\Ec(\rho^N|\nu^{\otimes N}\otimes\ell^{\otimes N}).
\]
and thus the limit free energy as
\begin{equation}\label{e:freenergy}
  \Fc_\star^\epsilon(\mu)
    = \Ec_\star(\mu)
      + \frac12\beta\iint H_2^\epsilon(\gamma^2,X^2)\rho^2(d\gamma^2,dX^2),
\end{equation}
where $(\rho^N)_{N\geq1}$ are the correlation functions of $\mu$.
Here $\Ec_\star$ and $\Fc_\star^\epsilon$ are defined on exchangeable
measures on $(K_\nu\times\Torus)^\N$ with absolutely continuous
(with respect to powers of $\nu\otimes\ell$) correlation measures,
with bounded densities. 

As in \cite[Theorem 11]{Ner2004}, we have the following result.
\begin{proposition}[Propagation of chaos]
  All limit points of $(\mu_{\beta,\epsilon}^N)_{N\geq1}$ are
  minima of the functional $\Fc_\star^\epsilon$.
  
  If $\Fc_\star^\epsilon$ has a unique minimum $\mu$,
  then $\mu$ is a product measure, namely there is
  a bounded function $\rho$
  such that all correlation functions $(\rho^k_\mu)_{k\geq1}$
  of $\mu$ have densities
  \[
    \rho^k_\mu(\gamma^k,x^k)
      = \rho(\gamma_1,x_1)\cdot\rho(\gamma_2,x_2)
        \cdot\dots\cdot\rho(\gamma_k,x_k).
  \]
  In other words, propagation of chaos holds.
\end{proposition}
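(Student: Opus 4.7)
The plan is to combine a Gibbs-variational argument with a de Finetti decomposition. For the first claim, each $\mu_{\beta,\epsilon}^N$ minimises the finite-$N$ free energy $\Fc_N^\epsilon$, and I would show that any subsequential limit must minimise the rescaled limit $\Fc_\star^\epsilon$. For the second claim, the key structural observation is that $\Fc_\star^\epsilon$ is \emph{affine} on the convex set of exchangeable measures, so a unique minimiser must be an extreme point of this set and hence a product by de Finetti's theorem.

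First I would establish convergence of $\frac{1}{N}\Fc_N^\epsilon(\mu_{\beta,\epsilon}^N)$ to $\Fc_\star^\epsilon(\mu_\infty)$ along a subsequence. By exchangeability,
\[
\frac{1}{N^2}\int H_N^\epsilon\,d\mu_{\beta,\epsilon}^N
= \frac{N-1}{2N}\iint H_2^\epsilon(\gamma^2,X^2)\,\rho_{\beta,\epsilon}^{N,2}(d\gamma^2,dX^2),
\]
and since $G_{m,\epsilon}$ is bounded and continuous, the previous lemma yields convergence to $\tfrac12\iint H_2^\epsilon\,d\rho_{\beta,\epsilon}^2$. For the entropy term, the same subadditivity that defines $\Ec_\star$ gives lower semicontinuity along the sequence, while for product test measures $\tau^{\otimes N}$ one has $\frac{1}{N}\Ec(\tau^{\otimes N}\mid(\nu\otimes\ell)^{\otimes N})=\Ec(\tau\mid\nu\otimes\ell)$ exactly. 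Combining with the Gibbs inequality $\Fc_N^\epsilon(\mu_{\beta,\epsilon}^N)\leq \Fc_N^\epsilon(\tau^{\otimes N})$ and dividing by $N$ yields $\Fc_\star^\epsilon(\mu_\infty)\leq \Fc_\star^\epsilon(\tau^{\otimes\N})$ for every admissible $\tau$.

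To upgrade this inequality from products to arbitrary exchangeable test measures and to prove the second claim, I would invoke the affineness of $\Fc_\star^\epsilon$ on exchangeable probability measures on $(K_\nu\times\Torus)^\N$: the Hamiltonian term depends on $\mu$ only through its two-point correlation, which is linear in $\mu$, while the specific entropy $\Ec_\star$ is a classical example of an affine functional (Robinson--Ruelle). Given any exchangeable $\tau$, its de Finetti decomposition $\tau=\int \rho^{\otimes\N}\,\lambda(d\rho)$ then gives $\Fc_\star^\epsilon(\tau)=\int \Fc_\star^\epsilon(\rho^{\otimes\N})\,\lambda(d\rho)\geq \Fc_\star^\epsilon(\mu_\infty)$, so $\mu_\infty$ is indeed a minimiser. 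If the minimiser $\mu$ is unique, the same decomposition $\mu=\int\rho^{\otimes\N}\,\lambda(d\rho)$ combined with affineness forces $\lambda$ to be concentrated on minimisers of $\rho\mapsto\Fc_\star^\epsilon(\rho^{\otimes\N})$; uniqueness then identifies each such $\rho^{\otimes\N}$ with $\mu$, so $\mu$ itself is a product.

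The main obstacle I expect lies in the entropy convergence of the first step: one must justify the inequality $\Ec_\star(\mu_\infty)\leq \liminf_N \frac{1}{N}\Ec(\mu_{\beta,\epsilon}^N\mid(\nu\otimes\ell)^{\otimes N})$ when the correlation functions converge only weakly in $L^p$. The uniform pointwise bound $\rho_{\beta,\epsilon}^{N,k}\leq C^k\e^{-\frac{\beta}{N}H_k^\epsilon}$ from the previous lemma, together with boundedness of $H_k^\epsilon$, supplies the tightness and uniform integrability needed to carry this through. This is precisely where regularity of $G_{m,\epsilon}$ is essential and is why, as the authors indicate, the argument is a simpler version of those in Neri's work.
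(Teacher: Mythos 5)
Your proposal is correct and follows essentially the same route the paper relies on (it omits the proof and refers to Theorem 11 of Neri's work, which is exactly this Messer--Spohn-type argument): Gibbs variational principle at finite $N$, passage to the limit via weak convergence of correlation functions, subadditivity and lower semicontinuity of the relative entropy, and then affineness of $\Fc_\star^\epsilon$ combined with the de Finetti decomposition to reduce to product measures and to conclude that a unique minimiser is a product. The surrounding discussion in the paper (the decomposition $\mu_{\beta,\epsilon}^\infty=\int\mu^{\otimes\N}\pi(d\mu)$ and the variational principle for $\rho$) confirms this is the intended argument.
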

Since the measures $\mu_{\beta,\epsilon}^N$ are
symmetric, each limit point $\mu_{\beta,\infty}^\epsilon$
will be
exchangeable thus, by the De Finetti theorem,
will be a superposition of product measures,
namely
\[
  \mu_{\beta,\epsilon}^\infty
    = \int \mu^{\otimes\N}\pi(d\mu),  
\]
for a measure $\pi$ on probability measures
on $K_\nu\times\Torus$. As in
\cite[Theorem 13]{Ner2004}, each
measure $\pi$ is concentrated on
product measures $\mu^{\otimes\N}$
such that $\mu=\rho\,\nu\otimes\ell$
and the variational principle
for $\Fc_\star^\epsilon$ can be read as a variational
principle for $\rho$, in terms of
the free energy
\begin{equation}\label{e:free2}
  \Fc^\epsilon(\rho)
    = \int \rho\log\rho\,d\nu\,d\ell
      + \frac12\beta\iint H_2^\epsilon(\gamma^2,x^2)
      \rho(\gamma_1,x_1)\rho(\gamma_2,x_2)
      \,d\nu^{\otimes2}\,d\ell^{\otimes2}.
\end{equation}
The corresponding Euler-Lagrange equation,
mean field equation in the language of
\cite{CagLioMarPul1992,CagLioMarPul1995,Lio1998}, is
\begin{equation}\label{e:mfe}
  \rho(\gamma,x)
    = \frac1Z\e^{-\beta\gamma\psi_\rho(x)},
\end{equation}
where $Z$ is the normalization constant,
and $\psi_\rho$ is the averaged stream
function, that is
$\psi_\rho(x)=\int\gamma G_{m,\epsilon}(x,y)\rho(\gamma,y)\,d\nu\,d\ell$.
It is elementary to check that the function
$\rho_0=1$ is a solution, with stream function
$\psi_{\rho_0}=0$. If $\mu_0=(\rho_0\nu\otimes\ell)^\N$
is the product measure corresponding to $\rho_0$,
then it is easy to check that
$\Fc_\star^\epsilon(\mu_0)=0$.
If $\beta\geq0$, the limit free energy
$\Fc_\star^\epsilon$ is non--negative, and
$\mu_0$ is the unique minimum.
As in \cite{BodGui1999}, one
can actually show that
there is only one minimiser for
small negative values of $\beta$,
and thus propagation of chaos
also holds for those values of $\beta$
and limit measure $\mu_0$.
%%
%%%%%%%%%%%%%%%%%%%%%%%%%%%%%%%%%%%%%%%%%%%%
\subsubsection{Back to the original problem}

The program outlined here for $(\mu_{\beta,\epsilon}^N)_{N\geq1}$
does not work at $\epsilon=0$ from the very beginning,
because, as already pointed out, the densities
are too singular. On the other hand
the limit free energy $\Fc_\star^\epsilon$
makes sense at $\epsilon=0$, as well
as the mean field equation
\eqref{e:mfe}.
Moreover, as long as $\Fc_\star^0$ is
convex and non--negative, the unique
minimum is again $\mu_0$.
In Section~\ref{s:main} we prove that,
by taking the limit of measures
$(\mu_{\beta,\epsilon_N}^N)_{N\geq1}$,
with a careful choice of the sequence
$\epsilon_N\downarrow0$, one can derive,
at least when $\beta\geq0$, propagation of
chaos, a law of large numbers and
a central limit theorem for the
empirical density of the pair
intensity-position of vortices.

Before that
we wish to give some comments about
the case when $\beta$ is negative.
First of all, we do not expect
that $\nu\otimes\ell$ will
be a minimiser for all negative
values of $\beta$. This is true
for all values of $\epsilon$,
in particular for the interesting
case $\epsilon=0$ and this is the
reason we give a detailed computation
below. The computation is similar to
\cite[section 5.3]{Lio1998}.
\begin{lemma}
  Let $\epsilon\geq0$ and $\beta<0$. Then
  $\mu_0=\nu\otimes\ell$ is not a minimiser
  of the free energy \eqref{e:free2}
  for $\beta<\beta_0$, where
  \[
    \beta_0
      \eqdef - \frac{\lambda_1^{\frac{m}2}\e^{\epsilon\lambda_1}}
        {\nu(\gamma^2)},
  \]
  and $\nu(\gamma^2)=\int\gamma^2\,d\nu$.
\end{lemma}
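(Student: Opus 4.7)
The plan is to test the minimality of $\rho_0 = 1$ by computing the second variation of $\Fc^\epsilon$ at $\rho_0$ along a carefully chosen perturbation, and exhibiting one that makes the quadratic form strictly negative when $\beta<\beta_0$.

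First I would verify that $\Fc^\epsilon(\rho_0)=0$: the entropy term vanishes since $\rho_0=1$, and the interaction integral vanishes because
\[
  \int_\Torus G_{m,\epsilon}(x,y)\,d\ell(y)=0
\]
for every $x$, as a consequence of $\int e_k\,d\ell=0$ for all $k\geq 1$ (eigenfunctions of $-\Delta$ with zero spatial average). Then I would consider admissible perturbations of the form $\rho_\eta(\gamma,x)=1+\eta f(\gamma,x)$, where $f$ is bounded and satisfies the normalisation $\int f\,d\nu\,d\ell=0$; for small $\eta$ the density $\rho_\eta$ is nonnegative and gives a valid competitor.

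Next I would expand $\Fc^\epsilon(\rho_\eta)$ up to order $\eta^2$. The entropy expands as
\[
  \int (1+\eta f)\log(1+\eta f)\,d\nu\,d\ell
    = \tfrac{\eta^2}{2}\int f^2\,d\nu\,d\ell + o(\eta^2),
\]
using $\int f\,d\nu\,d\ell=0$. For the interaction term, the order $\eta^0$ and $\eta^1$ contributions both vanish by the same zero-average property of $G_{m,\epsilon}$ noted above, so only the order $\eta^2$ piece survives, giving the quadratic form
\[
  Q(f) \eqdef \int f^2\,d\nu\,d\ell
    + \beta\iint \gamma_1\gamma_2 G_{m,\epsilon}(x_1,x_2)
      f(\gamma_1,x_1)f(\gamma_2,x_2)\,d\nu^{\otimes2}\,d\ell^{\otimes2}.
\]
If $\rho_0$ were a minimiser, then $Q(f)\geq0$ for all admissible $f$.

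Finally I would choose $f(\gamma,x)=\gamma e_1(x)$ and compute $Q(f)$ explicitly. The normalisation $\int f\,d\nu\,d\ell=\nu(\gamma)\int e_1\,d\ell=0$ holds; moreover $\int f^2\,d\nu\,d\ell=\nu(\gamma^2)$ and, using the spectral representation \eqref{eq:greenreg} together with orthonormality of $\{e_k\}$,
\[
  \iint G_{m,\epsilon}(x_1,x_2)e_1(x_1)e_1(x_2)\,d\ell^{\otimes2}
    = \lambda_1^{-\frac{m}2}\e^{-\epsilon\lambda_1}.
\]
Therefore
\[
  Q(f)
    = \nu(\gamma^2)\bigl(1 + \beta\,\nu(\gamma^2)\lambda_1^{-\frac{m}2}\e^{-\epsilon\lambda_1}\bigr),
\]
which is strictly negative exactly when $\beta<\beta_0$. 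Consequently $\Fc^\epsilon(\rho_\eta)<\Fc^\epsilon(\rho_0)$ for $\eta$ sufficiently small, and $\rho_0$ cannot be a minimiser. There is no real obstacle here beyond keeping track of the expansion; the only mild point is to justify dropping higher-order terms, but since $f$ is bounded on the support of $\nu\otimes\ell$ this is standard by dominated convergence.
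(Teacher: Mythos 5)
Your proof is correct and follows essentially the same route as the paper: perturb $\rho_0=1$ linearly, expand the free energy to second order, and test the resulting quadratic form with $\varphi=\gamma e_1$ to get the sign condition $1+\beta\,\nu(\gamma^2)\lambda_1^{-\frac{m}{2}}\e^{-\epsilon\lambda_1}<0$. Your version is slightly more explicit (checking that the zeroth and first order interaction terms vanish via the zero average of $G_{m,\epsilon}$, and keeping the overall factor $\nu(\gamma^2)$ in the quadratic form), but the argument is the same.
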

\begin{proof}
  Let $\varphi$ be bounded and with zero average with
  respect to $\nu\otimes\ell$, and set $\rho_t=1+t\varphi$,
  so that $\rho_t\nu\otimes\ell$ is a perturbation
  of $\mu_0$ for $t$ small. Clearly $\Fc^\epsilon(\rho_0)=0$
  and
  \[
    \Fc^\epsilon(\rho_t)
      = \int\rho_t\log\rho_t\,d\nu\,d\ell
      + \frac12\beta t^2\|(-\Delta)^{-\frac{m}4}
      \e^{\frac12\epsilon\Delta}\bar\varphi\|_{L^2(\ell)}^2
  \]
  where $\bar\varphi(x)=\int\gamma\varphi(\gamma,x)\,\nu(d\gamma)$.
  Expand the entropy around $t=0$ and choose
  $\varphi=\gamma e_1$, to get
  \[
    \Fc^\epsilon(\rho_t)
      = \Fc^\epsilon(\rho_0)
      + \frac12t^2\bigl(1+\beta\lambda_1^{-\frac{m}2}
        \e^{-\epsilon\lambda_1}\nu(\gamma^2)\bigr)
      + o(t^2).
  \]
  With the choice of $\beta$ as in the statement,
  $\mu_0$ cannot be a minimiser.
\end{proof}
The previous result can be read in terms of
the equation for the averaged stream function.
If $\rho$ is a solution of the mean field
equation \eqref{e:mfe}, define the
averaged scalar,
\[
  \theta_\rho(x)
    = \int\gamma\rho(\gamma,x)\nu(d\gamma)
      - \iint\gamma\rho(\gamma,x)\,d\nu\,d\ell,
\]
and the averaged stream function
$\psi_\rho=\int G_{m,\epsilon}(x,y)\theta_\rho(y)\,dy$,
then $\psi_\rho$ satisfies the following version
of \eqref{e:mfe},
\[
  D_{m,\epsilon}\psi
    = \frac{\int\gamma\e^{-\beta\gamma\psi_\rho}\,d\nu
      - \iint\gamma\e^{-\beta\gamma\psi_\rho}\,d\nu\,d\ell}
      {\iint\e^{-\beta\gamma\psi_\rho}\,d\nu\,d\ell}.
\]
where $D_{m,\epsilon}=(-\Delta)^{\frac{m}2}\e^{-\epsilon\Delta}$.
For $\rho=1$, $\theta_\rho=\psi_\rho=0$.
The linearisation of the above nonlinear equation
around $\psi=0$ yields the operator
$D_{m,\epsilon}+\beta\nu(\gamma^2)I$,
which is positive definite for
$\epsilon\geq0$ and $\beta$ as in
the previous lemma. At least
when $\epsilon>0$, due to uniform
bounds on the minima that one can derive as
in \cite[Property 2.2]{BodGui1999},
this shows that the previous lemma is optimal.
In the case $\epsilon=0$ unfortunately
these bounds are not available and
this is only an indication on what could happen.

We do not know if a law of large numbers
and a central limit theorem hold
for $-\beta_0<\beta<0$,
or if Gaussian fluctuations hold up 
to the value $\beta_0$. This is the
subject of a work in progress.
\begin{remark}
  One can derive a large deviation principle,
  as in \cite{BodGui1999}, for the regularized
  system at fixed $\epsilon>0$. We have not been
  able to derive a large deviation principle
  in the limit $N\uparrow\infty$ and
  $\epsilon=\epsilon(N)\downarrow0$, similarly to
  the results of the next section, due to
  a unsatisfactory control of the free energy,
  the expected rate function.
\end{remark}
%%
%%
%%
%%%%%%%%%%%%%%%%%%%%%%%%%%%%%%%%%%%%
\section{Main results}\label{s:main}

In this section we illustrate our main results, that is convergence
of distributions of a finite number of vortices and propagation
of chaos, and a law of large numbers and a central limit theorem
for the point vortex system under the assumption of positive
temperature $\beta>0$. Our results are asymptotic both in
the number of vortices \emph{and} the regularization
parameter $\epsilon$, and thus they capture the behaviour
of the original system \eqref{e:euler}. The results hold,
though, only if the regularization parameter is allowed
to go to zero with a speed, with respect to the number
of vortices, which is at least \emph{logarithmically slow}.

We know from Section~\ref{s:reg_mean_field}
that, at finite $\epsilon$, propagation of chaos holds and
the limit distribution of a pair (position, intensity) is
the measure $\nu\otimes\ell$. This is also the candidate
limit when $\epsilon,N$ converge jointly to $0$ and $\infty$.
This is the first main result of this section.
\begin{theorem}[Convergence of finite dimensional distributions]\label{t:chaos}
  Assume $m<2$ and $\beta>0$, and fix a sequence
  $\epsilon=\epsilon(N)\downarrow0$ so that
  \begin{equation}\label{e:anneal}
    \epsilon(N)
      \downarrow0
        \quad\text{as}\quad N\uparrow\infty,\qquad
    \epsilon(N)\geq C(\log N)^{-\frac2{2-m}}
  \end{equation}
  with $C$ large enough (depending on $\nu$ and $\beta$).
  Then, as $N\to \infty$, the $k$-finite dimensional marginals of $\mu_{\beta,\epsilon}^N$ converge to $(\nu \otimes \ell)^{\otimes k}$.
  In particular, propagation of chaos holds.
\end{theorem}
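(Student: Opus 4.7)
The plan is to combine a logarithmic estimate on the partition function $Z_{\beta,\epsilon(N)}^N$ with a variational identification of the limit points of the correlation functions via the free energy $\Fc_\star^0$ from Section~\ref{s:reg_mean_field}. The equilibrium identity $\Fc_N^\epsilon(\mu_{\beta,\epsilon}^N) = -\log Z_{\beta,\epsilon}^N$ makes the bound on $Z_{\beta,\epsilon(N)}^N$ equivalent to a bound on the minimum of the free energy functional, and the logarithmic lower bound on $\epsilon(N)$ in~\eqref{e:anneal} is precisely what pushes this minimum, rescaled by $1/N$, to zero.

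\textbf{Step 1: bounds on the partition function.} Writing $\theta_N \eqdef \sum_j \gamma_j \delta_{X_j}$ and using the spectral formula~\eqref{eq:greenreg}, one has the clean decomposition
\[
  H_N^\epsilon(\gamma^N, X^N)
    = \tfrac12 \bigl\|(-\Delta)^{-m/4}\,\e^{\frac{\epsilon}{2}\Delta}\,\theta_N\bigr\|_{L^2(\ell)}^2
      - \tfrac12 \sum_{j=1}^N \gamma_j^2\, G_{m,\epsilon}(0),
\]
where $G_{m,\epsilon}(x,x) = G_{m,\epsilon}(0)$ by translation invariance. Since $\beta>0$, the positive-definite first term can be dropped in the upper bound to give $Z_{\beta,\epsilon}^N \leq \exp(\tfrac12 \beta M^2 G_{m,\epsilon}(0))$, with $M \eqdef \sup\{|\gamma|:\gamma\in K_\nu\}$. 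The scaling $G_{m,\epsilon}(0) \sim c_m\, \epsilon^{-(2-m)/2}$ then converts the hypothesis $\epsilon(N) \geq C(\log N)^{-2/(2-m)}$ into $\log Z_{\beta,\epsilon(N)}^N \leq C_1 \log N$, with $C_1$ as small as desired by enlarging $C$. For the matching lower bound, Jensen's inequality together with $\int H_N^\epsilon\, d(\nu \otimes \ell)^{\otimes N} = 0$ (the Green function has zero spatial average) yields $Z_{\beta,\epsilon}^N \geq 1$. Hence $\tfrac{1}{N}\log Z_{\beta,\epsilon(N)}^N \to 0$.

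\textbf{Step 2: passage to the limit.} By Prokhorov on the compact product $(K_\nu \times \Torus)^k$ and a diagonal extraction, along a subsequence the marginals $\mu_{\beta,\epsilon(N)}^{N,k}$ converge weakly for every $k$ to some $\mu^{\infty,k}$, assembling into an exchangeable measure $\mu_\infty$ on $(K_\nu \times \Torus)^{\N}$. The equilibrium identity $\tfrac{1}{N}\Fc_N^{\epsilon(N)}(\mu_{\beta,\epsilon(N)}^N) = -\tfrac{1}{N}\log Z_{\beta,\epsilon(N)}^N \to 0$, a $\Gamma$-liminf inequality
\[
  \Fc_\star^0(\mu_\infty)
    \leq \liminf_{N\to\infty} \tfrac{1}{N}\Fc_N^{\epsilon(N)}(\mu_{\beta,\epsilon(N)}^N)
    = 0,
\]
and the non-negativity of $\Fc_\star^0$ at $\beta>0$ (the specific entropy and the interaction term, which reads as the quadratic form $\|(-\Delta)^{-m/4}\bar\theta_\rho\|_{L^2(\ell)}^2$ integrated against the de Finetti measure, are both non-negative) together force $\Fc_\star^0(\mu_\infty) = 0$. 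Uniqueness of the minimiser of $\Fc_\star^0$ at $\mu_0 = (\nu \otimes \ell)^{\otimes \N}$, recorded at the end of Section~\ref{s:reg_mean_field}, identifies $\mu_\infty = \mu_0$, which is exactly the stated convergence of all finite-dimensional marginals.

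\textbf{Main obstacle.} The hardest step is the $\Gamma$-liminf for the interaction term,
\[
  \liminf_{N\to\infty} \iint \gamma_1 \gamma_2\, G_{m,\epsilon(N)}(X_1, X_2)\, d\rho^{N,2}_{\beta,\epsilon(N)}
    \geq \iint \gamma_1 \gamma_2\, G_m(X_1, X_2)\, d\rho^{\infty,2},
\]
which pairs weak convergence of the correlation densities with the strong convergence of the regularised kernel. The $L^\infty$-bound on $\rho^{N,2}_{\beta,\epsilon(N)}$ that one reads off from Step~1 grows polynomially in $N$, while $G_{m,\epsilon(N)} \to G_m$ holds in $L^q$ only for $q<2/(2-m)$. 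Splitting $G_m = G_{m,\epsilon(N)} + (G_m - G_{m,\epsilon(N)})$ and interpolating between the conserved $L^1$-norm and the blowing-up $L^\infty$-norm of the densities against the $L^q$-rate for the kernel is the delicate step where the logarithmic threshold in~\eqref{e:anneal} is sharp: a faster decay of $\epsilon(N)$ would make the polynomial blow-up of $\|\rho^{N,2}_{\beta,\epsilon(N)}\|_{L^\infty}$ override the vanishing kernel error. The lower-semicontinuity of the specific-entropy contribution is, by contrast, classical.
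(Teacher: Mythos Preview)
Your variational route is entirely different from the paper's, which never touches free energies: Section~\ref{s:chaos} computes the characteristic function of the $n$-marginal directly, representing the Gibbs density through a Gaussian field (Lemma~\ref{l:gaussian_rep}), factoring the spatial integrals over vortices, and controlling the resulting products via Lemma~\ref{l:approx} and the concentration Lemma~\ref{l:bernstein}. That machinery is then reused verbatim for the law of large numbers and the central limit theorem, which is its main dividend.

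Your proposal has a genuine gap at the ``Main obstacle''. The interpolation you sketch does not close: under~\eqref{e:anneal} the $L^\infty$-bound on $\rho^{N,2}_{\beta,\epsilon(N)}$ coming from Step~1 is $N^{c_1}$ for a \emph{fixed} $c_1>0$ (it is $\exp(\tfrac\beta2 M^2 G_{m,\epsilon}(0))$ with $G_{m,\epsilon}(0)$ of order $\log N$), whereas $\|G_m-G_{m,\epsilon(N)}\|_{L^q}$ decays only like a negative power of $\log N$. The polynomial always beats the logarithm, so the product of density norm and kernel error diverges and the $\Gamma$-liminf for the interaction term is not established. There is also a prior issue: you have no a~priori regularity on $\rho^{\infty,2}$, so the singular integral $\int\gamma_1\gamma_2\,G_m\,d\rho^{\infty,2}$ entering $\Fc_\star^0(\mu_\infty)$ may not even be defined.

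The fix is to bypass the interaction liminf entirely, and your Step~1 already contains what is needed. From $H_N^\epsilon\geq-\tfrac{N}{2}M^2 G_{m,\epsilon}(0)$ one gets $\tfrac{\beta}{N^2}\int H_N^{\epsilon(N)}\,d\mu^N\geq -o(1)$; combined with $\tfrac1N\Fc_N^{\epsilon(N)}(\mu^N)=-\tfrac1N\log Z^N\to0$ and non-negativity of relative entropy, this forces $\tfrac1N\Ec\bigl(\mu^N\mid(\nu\otimes\ell)^{\otimes N}\bigr)\to0$ directly. Superadditivity and lower semicontinuity of relative entropy (your ``classical'' step) then give $\Ec_\star(\mu_\infty)=0$, and the affine formula $\Ec_\star(\mu_\infty)=\int\Ec(\mu\mid\nu\otimes\ell)\,\pi(d\mu)$ over the de~Finetti decomposition forces $\pi=\delta_{\nu\otimes\ell}$, without any appeal to the interaction part of $\Fc_\star^0$.
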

The proof of convergence of finite dimensional distribution
will be given in Section~\ref{s:chaos}.

We turn to the second limit theorem. Consider a system of $N$ point
vortices at equilibrium, with equilibrium measure~\eqref{e:reg_mu},
described by the $N$ pairs
$(\gamma_1^N,X_1^N),(\gamma_2^N,X_2^N),\dots,(\gamma_N^N,X_N^N)$
of intensity and position. Define the joint empirical distribution
\[
  \eta_N
    = \frac1N\sum_{j=1}^N \delta_{(\gamma_j^N,X_j^N)}
\]
of intensity and position of point vortices.
\begin{theorem}[Law of large numbers]\label{t:lln}
  Assume $m<2$ and $\beta>0$, and choose $\epsilon=\epsilon(N)$ as in
  the previous theorem. Then 
  \[
    \eta_N
      \rightharpoonup\nu\otimes\ell,
        \qquad\emph{in probability}
  \]
  as $N\uparrow\infty$.
\end{theorem}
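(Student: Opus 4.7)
The plan is to reduce the weak convergence in probability of $\eta_N$ to convergence of the first two moments of $\scalar{f,\eta_N}$ for a countable determining family of test functions, and then invoke Theorem~\ref{t:chaos}. Since $K_\nu\times\Torus$ is compact, $\Pc(K_\nu\times\Torus)$ endowed with the weak topology is a compact metrizable space; I fix a countable family $(f_j)_{j\geq1}\subset C(K_\nu\times\Torus)$ dense in the uniform norm (for example, products of polynomials in $\gamma$ and trigonometric polynomials in $x$). Since the candidate limit $\nu\otimes\ell$ is deterministic, it suffices to show that, for each fixed $j$,
\[
  \scalar{f_j,\eta_N}
    \longrightarrow \int f_j\,d(\nu\otimes\ell)
    \qquad\text{in probability.}
\]

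Fix such a bounded continuous $f=f_j$. By the Chebyshev inequality, convergence in probability of $\scalar{f,\eta_N}$ follows from convergence of its first moment and vanishing of its variance. By exchangeability of $\mu_{\beta,\epsilon(N)}^N$,
\[
  \E\bigl[\scalar{f,\eta_N}\bigr]
    = \int f\,d\rho^{N,1}_{\beta,\epsilon(N)},
\]
which, by Theorem~\ref{t:chaos} with $k=1$, converges to $\int f\,d(\nu\otimes\ell)$. Again by exchangeability,
\[
  \mathrm{Var}\bigl(\scalar{f,\eta_N}\bigr)
    = \frac1N\,\mathrm{Var}_{\mu^N}\bigl(f(\gamma_1,X_1)\bigr)
      + \frac{N-1}{N}\,\mathrm{Cov}_{\mu^N}\bigl(f(\gamma_1,X_1),f(\gamma_2,X_2)\bigr).
\]
The first summand is $O(1/N)$ since $f$ is bounded. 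The second involves only the two-point correlation $\rho^{N,2}_{\beta,\epsilon(N)}$, which by Theorem~\ref{t:chaos} with $k=2$ converges weakly to $(\nu\otimes\ell)^{\otimes 2}$; combining this with the convergence of the first moment gives $\mathrm{Cov}_{\mu^N}(f(\gamma_1,X_1),f(\gamma_2,X_2))\to 0$, hence $\mathrm{Var}(\scalar{f,\eta_N})\to 0$.

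The proof is completed by a diagonal argument over the countable family $(f_j)$, which upgrades the pointwise convergence in probability of each $\scalar{f_j,\eta_N}$ to weak convergence of the random measures $\eta_N$ to $\nu\otimes\ell$ in probability (with respect to any metric compatible with the weak topology on $\Pc(K_\nu\times\Torus)$). The main, and essentially only, obstacle in this law of large numbers has already been removed by Theorem~\ref{t:chaos}: once propagation of chaos is granted at the level of the first two correlation functions, the statement reduces to the above routine second moment computation.
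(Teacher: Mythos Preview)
Your argument is correct and takes a genuinely different route from the paper. The paper does not invoke Theorem~\ref{t:chaos} at all; instead it repeats the same characteristic-function machinery used to prove Theorem~\ref{t:chaos}: via Lemma~\ref{l:weak_conv} it reduces to showing $\E_{\mu_{\beta,\epsilon}^N}[\e^{\im\scalar{\psi,\eta_N}}]\to\e^{\im(\nu\otimes\ell)(\psi)}$ for each test $\psi$, then applies the Gaussian representation of Lemma~\ref{l:gaussian_rep}, decomposes into a leading term $\Lc(\psi)$ and an error $\Ec(\psi)$, and controls both using Lemma~\ref{l:bernstein} and the choice of $\epsilon(N)$.

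Your approach exploits the classical equivalence (in the spirit of Sznitman) between propagation of chaos and convergence of the empirical measure: once the one- and two-particle marginals converge to products, the law of large numbers is an immediate second-moment computation. This is shorter and more conceptual. The paper's approach, by contrast, is deliberately parallel to the proofs of Theorems~\ref{t:chaos} and~\ref{t:clt}; the CLT genuinely requires the characteristic-function computation (it cannot be read off from the marginals), so the authors run the same template in all three cases. What their approach buys is uniformity of method and a direct handle on the characteristic function that is then reused for the fluctuations; what yours buys is economy, making transparent that Theorem~\ref{t:lln} contains no new analytic input beyond Theorem~\ref{t:chaos}.
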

The proof of the law of large numbers is postponed to
Section~\ref{s:lln}.
\begin{remark}\label{r:pseudolln}
  It is elementary to verify that
  convergence of $\eta_N$ to $\ell\otimes\nu$ implies
  immediately convergence of the empirical
  pseudo-vorticity,
  \[
    \theta_N
      =\frac1N\sum_{j=1}^N\gamma_j^N\delta_{X_j^N}
  \]
  to $\nu(\gamma)\ell$, with $\nu(\gamma)=\int\gamma\,\nu(d\gamma)$.
  This yields a law of large numbers for the
  empirical pseudo-vorticity.
\end{remark}
Finally, we can analyze fluctuations with respect to
the limit stated in the previous theorem, namely
the limit of the measures
\[
  \zeta_N
    = \sqrt{N}(\eta_N - \nu\otimes\ell)
\]
to a Gaussian distribution.
To this end define the operators $\Es$, $\Gs$ as
\[
  \begin{gathered}
    \Gs\phi(x)
      \eqdef\int_\Torus G_m(x,y)\phi(y)\,\ell(dy),\\
    \Es\phi(\gamma,x)
      \eqdef\gamma\int_{K_\nu}\int_\Torus
        \gamma'G_m(x,y)\phi(\gamma',y)\,\nu(d\gamma')\ell(dy).
  \end{gathered}
\]
The operator $\Gs$ provides the solution to the problem
$(-\Delta)^{\frac{m}2}\Phi=\phi$ with periodic boundary conditions
and zero spatial average, and extends naturally to functions depending
on both variables $\gamma$, $x$ by acting on the spatial
variable only.
The proof of the following theorem will be
the subject of Section~\ref{s:clt}.
\begin{theorem}[Central limit theorem]\label{t:clt}
  Assume $\beta>0$ and choose $\epsilon=\epsilon(N)$ as in
  \eqref{e:anneal}. Then $(\zeta_N)_{N\geq1}$ converges,
  as $N\uparrow\infty$,
  to a Gaussian distribution with covariance
  $I - \beta(I+\beta\Gamma_\infty\Gs)^{-1}\Es$, in the
  sense that for every test function $\psi\in L^2(\nu\otimes\ell)$,
  $\scalar{\psi,\zeta_N}$ converges in law to a real centred Gaussian
  random variable with variance
  \[
    \sigma_\infty(\psi)^2
      \eqdef\scalar{I - \beta(I+\beta\Gamma_\infty\Gs)^{-1}\Es(\psi-\bar\psi),
        (\psi-\bar\psi)},
  \]
  where $\bar\psi=(\nu\otimes\ell)(\psi)$
  and $\Gamma_\infty=\nu(\gamma^2)$.
\end{theorem}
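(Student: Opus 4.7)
The plan is a characteristic-function argument based on a Hubbard--Stratonovich linearisation of the quadratic Hamiltonian, in the spirit of \cite{BenPicPul1987,BodGui1999}. Fix $\psi\in L^2(\nu\otimes\ell)$, set $\tilde\psi\eqdef\psi-\bar\psi$, and note that $\scalar{\psi,\zeta_N}=\scalar{\tilde\psi,\zeta_N}=N^{-1/2}\sum_j\tilde\psi(\gamma_j,X_j)$ since $\zeta_N$ annihilates constants. Writing the characteristic function as a ratio of tilted partition functions,
\[
  \chi_N(t)
    \eqdef\E_{\mu_{\beta,\epsilon(N)}^N}\bigl[\e^{\im t\scalar{\tilde\psi,\zeta_N}}\bigr]
    = \frac{Z_N(t)}{Z_N(0)},
\]
I would linearise $H_N^\epsilon$ via the identity, valid because $G_{m,\epsilon}$ is positive definite on zero-average functions,
\[
  \e^{-\frac{\beta}{2N}\sum_{j,k}\gamma_j\gamma_k G_{m,\epsilon}(X_j,X_k)}
    = \int\e^{\im\sqrt{\beta/N}\sum_j\gamma_j\phi(X_j)}\,\mu_G^\epsilon(d\phi),
\]
where $\mu_G^\epsilon$ is the centred Gaussian on zero-average fields on $\Torus$ with covariance $G_{m,\epsilon}$. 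The missing diagonal contributes the explicit factor $\exp(\frac{\beta}{2N}\sum_j\gamma_j^2 G_{m,\epsilon}(X_j,X_j))$, which I absorb into the vortex integral.

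After Fubini the vortex part factorises into $N$ copies of a single-site integral $I_N(\phi,t)$. Taylor expanding in the small parameter $N^{-1/2}$, the first-order contribution vanishes because $\int\tilde\psi\,d\nu\,d\ell=0$ and $\int\phi\,d\ell=0$, while the second-order contribution, after integrating against $d\nu\,d\ell$, reads
\[
  I_N(\phi,t)
    = 1 + \frac{1}{N}\Bigl(-\tfrac12\bigl(\beta\Gamma_\infty\|\phi\|_{L^2(\ell)}^2
      +2t\sqrt\beta\scalar{\phi,h_{\tilde\psi}}
      +t^2\|\tilde\psi\|^2\bigr)
      +\tfrac12\beta\Gamma_\infty\mathrm{Tr}(G_{m,\epsilon})\Bigr)
      +o(N^{-1}),
\]
where $h_{\tilde\psi}(x)\eqdef\int\gamma\tilde\psi(\gamma,x)\,\nu(d\gamma)$ and $\Gamma_\infty=\nu(\gamma^2)$. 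Raising to the $N$-th power, the deterministic trace cancels in the ratio $Z_N(t)/Z_N(0)$, and passing to the limit reduces $\chi_N(t)$ to an explicit Gaussian integral against $\mu_G$ (with covariance $\Gs$). Completing the square in $\phi$ gives
\[
  \chi_N(t)\longrightarrow
    \e^{-\frac{t^2}{2}\|\tilde\psi\|^2
      +\frac{t^2\beta}{2}\scalar{h_{\tilde\psi},(I+\beta\Gamma_\infty\Gs)^{-1}\Gs h_{\tilde\psi}}},
\]
and the identity $\scalar{h_{\tilde\psi},(I+\beta\Gamma_\infty\Gs)^{-1}\Gs h_{\tilde\psi}}=\scalar{(I+\beta\Gamma_\infty\Gs)^{-1}\Es\tilde\psi,\tilde\psi}$, immediate from $\Es\tilde\psi(\gamma,x)=\gamma\,\Gs h_{\tilde\psi}(x)$ and the fact that $(I+\beta\Gamma_\infty\Gs)^{-1}$ acts only on the spatial variable, identifies the exponent with $-\tfrac12 t^2\sigma_\infty(\psi)^2$. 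L\'evy's continuity theorem (applied along a countable dense family of test functions) then upgrades pointwise convergence of $\chi_N$ to the claimed weak convergence of $\zeta_N$.

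The main obstacle is the quantitative control of the remainders in the Taylor expansion uniformly as $\epsilon(N)\downarrow 0$. Higher cumulants of the single-site integrand depend on $\|G_{m,\epsilon}\|_{L^\infty}$, $\mathrm{Tr}(G_{m,\epsilon})$ and higher Hilbert--Schmidt-type norms of $G_{m,\epsilon}$; for $m<2$ on $\Torus$ these diverge like a power of $\epsilon^{(m-2)/2}$, since $\lambda_k\asymp k$. The scheduling $\epsilon(N)\geq C(\log N)^{-2/(2-m)}$ in \eqref{e:anneal} is precisely tuned so that such $\epsilon$-singular factors are dominated by the $1/N$ gain from the order of the expansion, in the same spirit as the free-energy estimates underlying Theorem~\ref{t:chaos}. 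Once these bounds are in place, positivity of $\Gs$ combined with $\beta>0$ makes $I+\beta\Gamma_\infty\Gs$ boundedly invertible on $L^2(\ell)$, so the limit Gaussian integral and hence $\sigma_\infty(\psi)^2$ are well defined and non-negative, consistently with the Gaussian interpretation.
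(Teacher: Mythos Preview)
Your strategy is the same as the paper's: linearise the quadratic Hamiltonian by the Gaussian representation of Lemma~\ref{l:gaussian_rep} (Hubbard--Stratonovich), factorise over vortices, approximate each one-site factor to second order, and evaluate the resulting Gaussian integral. The limiting variance you reach is the correct $\sigma_\infty(\psi)^2$.

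There is one genuine variation worth noting. You integrate the single-site factor over $(\gamma,x)$ jointly, while the paper integrates first over $x$, then exactly over the Gaussian field $U_{\beta,\epsilon}$, and only at the end over $\gamma^N$. Your ordering replaces $\gamma^2$ by $\Gamma_\infty$ immediately, so you never meet the random $\Gamma_N=\tfrac1N\sum_j\gamma_j^2$; in the paper's ordering this fluctuation survives the Gaussian integral and has to be controlled by the Bernstein-type Lemma~\ref{l:bernstein} applied to $F_N(\Gamma_N-\Gamma_\infty)$. So your route, if carried through, would shorten the argument by removing that lemma.

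The gap is in the step ``Taylor expand $I_N(\phi,t)$, raise to the $N$-th power, and pass to the limit under the Gaussian integral''. The remainder in your expansion is a function of the field $\phi$, and $\phi$ is sampled from $\mu_G^{\epsilon(N)}$ whose pointwise variance $G_{m,\epsilon}(0,0)$ diverges with $N$; in particular the limit measure $\mu_G$ with covariance $\Gs$ lives on distributions, so the phrase ``Gaussian integral against $\mu_G$'' and the quantity $\|\phi\|_{L^2}^2$ are not defined without renormalisation. The paper handles exactly this point by \emph{not} approximating before the field integral: it replaces each $A_{\epsilon j}^N$ by $B_{\epsilon j}^N$ via the explicit bound of Lemma~\ref{l:approx}, uses the telescoping identity $\prod A-\prod B=\sum_k(\prod_{<k}A)D_k(\prod_{>k}B)$ together with $|A|,|B|\le1$ to reduce the error to $\sum_k\E_{U}|D_k|\lesssim N^{-1/2}G_{m,\epsilon}(0,0)^{3/2}$, and then computes $\E_{U}[\prod B]$ \emph{exactly} mode by mode at finite $\epsilon$ before letting $\epsilon=\epsilon(N)\downarrow0$. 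In your scheme you would need the same telescoping device (or an equivalent) to control $I_N^N-\exp(Q(\phi))$ uniformly enough to integrate against $\mu_G^\epsilon$; simply invoking the scheduling~\eqref{e:anneal} is not sufficient without that structure.
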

\begin{remark}\label{r:invariant}
  As in Remark~\ref{r:pseudolln}, we can derive a central limit
  theorem for the empirical pseudo-vorticity $\theta_N$
  Indeed, $\sqrt{N}(\theta_N-\nu(\gamma)\ell)$ converges to
  a Gaussian distribution with covariance
  $\Gamma_\infty(I+\beta\Gamma_\infty\Gs)^{-1}$, in the
  sense that for every test function $\psi\in L^2(\ell)$,
  $\scalar{\sqrt{N}(\theta_N-\nu(\gamma)\ell),\psi}$
  converges in law to a real centred Gaussian random
  variable with variance
  \[
    \tilde\sigma_\infty(\psi)^2
      = \Gamma_\infty\scalar{(I+\beta\Gamma_\infty\Gs)^{-1}
        (\psi-\bar{\psi}),(\psi-\bar{\psi})}.
  \]
  The Gaussian measure obtained corresponds to the invariant
  measure \eqref{e:invariant} of the original system \eqref{e:euler},
  when one takes $\alpha=1/\Gamma_\infty$.
\end{remark}
\begin{remark}[Quenched results]\label{r:quenched}
  The above results hold also in a ``quenched'' version, namely
  if intensities are non-random but given at every $N$.
  For instance, consider the result about convergence
  of finite dimensional distributions of vortices and
  propagation of chaos (Theorem~\ref{t:chaos}).
  For every $N$, fix a family
  $\Gamma_N^q\eqdef(\gamma_j^N)_{j=1,2,\dots,N}$
  and consider the quenched version of \eqref{e:reg_mu},
  \[
    \mu_{\beta,\epsilon}^{\Gamma_N^q,N}(dx_1,\dots,dx_N)
      = \frac1{Z_{\beta,\epsilon}^{\Gamma_N^q,N}}
        \e^{-\frac\beta{N}
        H_N^\epsilon(\gamma_1^N,\dots,\gamma_N^N,x_1,\dots,x_N)}
        \,d\ell^{\otimes N}.
  \]
  If there is a measure $\nu_\star$ such that
  \begin{equation}\label{e:quenchedconv}
    \frac1N\sum_{j=1}^N\delta_{\gamma_j^N}
      \rightharpoonup\nu_\star,
        \qquad N\uparrow\infty,
  \end{equation}
  and, due to our singular setting (in view of
  Lemma~\ref{l:bernstein}), if
  \[
    \Bigl|\frac1N\sum_{j=1}^N (\gamma_j^N)^2
        - \int\gamma^2\,\nu(d\gamma)\Bigr|
        G_{m,\epsilon_N}(0,0)
      \longrightarrow 0
        \qquad N\uparrow\infty,
  \]
  then the $k$-dimensional marginals of
  $\mu_{\beta,\epsilon}^{\Gamma_N^q,N}$
  converge to $(\nu\otimes\ell)^{\otimes k}$,
  for all $k\geq1$. Under the same assumptions,
  the law of large numbers also holds. To
  obtain the central limit theorem, one needs
  to assume some concentration condition
  on the convergence~\eqref{e:quenchedconv}.
\end{remark}
%%
%%%%%%%%%%%%%%%%%%%%%%%%%%%%%%%%%%%%%%%%%%%%%%%%%%%%
\section{Proofs of the main results}\label{s:proofs}

Prior to the proof of our main results we state some
preliminary results that will be useful in the rest
of the section.
\begin{lemma}\label{l:approx}
  Let $f\in L^3(\Torus)$ with zero average on $\Torus$, then
  \[
    \Big|\int_\Torus \e^{\im f(x)}\,d\ell - \e^{-\frac12\|f\|_{L^2}^2}\Big|
      \leq \|f\|_{L^3}^3.
  \]
  Here the norms $\|\cdot\|_{L^2}$ and $\|\cdot\|_{L^3}$
  are computed with respect to the normalized Lebesgue
  measure $\ell$ on $\Torus$.
\end{lemma}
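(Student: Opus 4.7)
The plan is to carry out a third-order Taylor expansion of both exponentials and exploit the hypothesis that $f$ has zero average. I start from the elementary pointwise estimate
\[
  |\e^{\im y} - 1 - \im y + \tfrac12 y^2|
    \leq \tfrac16 |y|^3,\qquad y\in\R,
\]
which follows from the integral form of the Taylor remainder. Applying this pointwise with $y=f(x)$ and integrating with respect to $\ell$, the linear term drops out by the zero-average assumption $\int f\,d\ell=0$, leaving
\[
  \Bigl|\int_\Torus \e^{\im f}\,d\ell - 1 + \tfrac12\|f\|_{L^2}^2\Bigr|
    \leq \tfrac16\|f\|_{L^3}^3.
\]

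Next I expand the right-hand side $\e^{-a}$ at $a=\tfrac12\|f\|_{L^2}^2\geq 0$: since $0\leq \e^{-a}-1+a\leq \tfrac12 a^2$ for $a\geq 0$, we get
\[
  \Bigl|\e^{-\frac12\|f\|_{L^2}^2} - 1 + \tfrac12\|f\|_{L^2}^2\Bigr|
    \leq \tfrac18\|f\|_{L^2}^4.
\]
Adding the two estimates via the triangle inequality and using the Jensen bound $\|f\|_{L^2}\leq \|f\|_{L^3}$ (valid because $\ell$ is a probability measure) gives
\[
  \Bigl|\int_\Torus \e^{\im f}\,d\ell - \e^{-\frac12\|f\|_{L^2}^2}\Bigr|
    \leq \tfrac16\|f\|_{L^3}^3 + \tfrac18\|f\|_{L^3}^4.
\]

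The only remaining issue is absorbing the quartic term into $\|f\|_{L^3}^3$ with constant $1$, which I expect to be the main subtle point. I would split into cases: if $\|f\|_{L^3}^3\geq 2$, then the left-hand side is trivially bounded by $2\leq \|f\|_{L^3}^3$ since both terms inside the absolute value have modulus at most $1$. Otherwise $\|f\|_{L^3}\leq 2^{1/3}$, and then $\tfrac16\|f\|_{L^3}^3+\tfrac18\|f\|_{L^3}^4\leq (\tfrac16+\tfrac{2^{1/3}}{8})\|f\|_{L^3}^3<\|f\|_{L^3}^3$. Combining the two cases yields the claimed bound.
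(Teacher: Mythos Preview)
Your proof is correct and follows essentially the same route as the paper: Taylor-expand both $\e^{\im f}$ and $\e^{-\frac12\|f\|_{L^2}^2}$ to second order around $0$, integrate, use the zero-average hypothesis, and bound the remainders. The paper simply quotes the two inequalities $|\e^{\im x}-(1+\im x-\tfrac12 x^2)|\leq|x|^3$ and $|\e^{-\frac12 x^2}-(1-\tfrac12 x^2)|\leq|x|^3$ and declares the rest elementary; your version is more careful, using the sharp Taylor constant $\tfrac16$ and then the case-split on $\|f\|_{L^3}^3\gtrless 2$ to absorb the quartic term and land exactly on the constant $1$, a detail the paper does not spell out.
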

\begin{proof}
  Using the well-known inequalities
  \[
  \begin{gathered}
  |\e^{\im x} - (1+\im x-\tfrac12 x^2)|
  \leq |x|^3,\\
  |\e^{-\frac12 x^2} - (1 - \tfrac12 x^2)|
  \leq |x|^3,
  \end{gathered}
  \]
  the proof is elementary.
\end{proof}
\begin{lemma}\label{l:weak_conv}
  Let $(\mu_N)_{N\geq1}$, $\mu_\infty$ be random probability
  measures on $\Torus\times K_\nu$. Then $(\mu_N)_{N\geq1}$
  converges in law to $\mu_\infty$ if and only if
  for every $\psi\in C(K_\nu\times\Torus)$,
  \[
    \E\bigl[\e^{\im\scalar{\psi,\mu_N}}\bigr]
      \longrightarrow \E\bigl[\e^{\im\scalar{\psi,\mu_\infty}}\bigr]
  \]
  Moreover, test functions can be taken
  in $C^1(K_\nu\times\Torus)$.
\end{lemma}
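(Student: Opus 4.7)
The plan is to exploit the fact that $\Pc(K_\nu\times\Torus)$, endowed with the weak topology, is itself a compact metric space (because $K_\nu\times\Torus$ is compact), so every sequence of $\Pc(K_\nu\times\Torus)$-valued random variables is automatically tight by Prohorov's theorem. The forward implication is then immediate: for every $\psi\in C(K_\nu\times\Torus)$, the map $\mu\mapsto\scalar{\psi,\mu}$ is continuous on $\Pc(K_\nu\times\Torus)$ by the very definition of the weak topology, hence $\mu\mapsto\e^{\im\scalar{\psi,\mu}}$ is a bounded continuous functional on $\Pc(K_\nu\times\Torus)$, and convergence in law applied to this functional yields the stated convergence of characteristic functionals.

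For the reverse direction, by tightness every subsequence of $(\mu_N)_{N\geq1}$ admits a further subsequence $(\mu_{N_k})$ converging in law to some random probability measure $\mu_\star$. Applying the forward direction to this subsequence gives
\[
  \E\bigl[\e^{\im\scalar{\psi,\mu_{N_k}}}\bigr]
    \longrightarrow\E\bigl[\e^{\im\scalar{\psi,\mu_\star}}\bigr],
\]
while the assumption gives convergence of the same quantity to $\E[\e^{\im\scalar{\psi,\mu_\infty}}]$. Hence the characteristic functionals of $\mu_\star$ and $\mu_\infty$, viewed as laws on the compact metric space $\Pc(K_\nu\times\Torus)$, coincide. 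At this point I would invoke Stone--Weierstrass: the $*$-algebra generated by $\{\mu\mapsto\e^{\im\scalar{\psi,\mu}}:\psi\in C(K_\nu\times\Torus)\}$ contains the constants, is closed under complex conjugation (replace $\psi$ by $-\psi$), and separates points of $\Pc(K_\nu\times\Torus)$ (since continuous functions separate probability measures), so it is dense in $C(\Pc(K_\nu\times\Torus))$. Two probability laws on $\Pc(K_\nu\times\Torus)$ giving the same integral to every such exponential therefore agree, which shows $\mu_\star\stackrel{d}{=}\mu_\infty$. Since every subsequential limit has the law of $\mu_\infty$, the whole sequence converges in law to $\mu_\infty$.

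For the moreover assertion, any $\psi\in C(K_\nu\times\Torus)$ can be approximated uniformly by $\psi_n\in C^1(K_\nu\times\Torus)$ (via mollification in the $\Torus$-variable, after a continuous extension of $\psi$ from $K_\nu$ to a neighborhood, then mollification also in $\gamma$). The elementary bound
\[
  \bigl|\e^{\im\scalar{\psi,\mu}}-\e^{\im\scalar{\psi_n,\mu}}\bigr|
    \leq \|\psi-\psi_n\|_\infty,
\]
valid uniformly in $\mu\in\Pc(K_\nu\times\Torus)$, together with a standard $\varepsilon/3$ argument, transfers convergence of the characteristic functionals from test functions in $C^1$ to test functions in $C$, so the equivalence remains valid with the smaller class.

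The only nontrivial ingredient is the separation/Stone--Weierstrass step showing that the family of exponentials determines laws on $\Pc(K_\nu\times\Torus)$; everything else is tightness on a compact space, continuity of linear functionals in the weak topology, and a routine density approximation.
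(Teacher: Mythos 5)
Your proof is correct and follows essentially the same route as the paper: both exploit compactness of $\Pc(K_\nu\times\Torus)$ and a Stone--Weierstrass argument showing that the algebra generated by the exponentials $\mu\mapsto\e^{\im\scalar{\psi,\mu}}$ separates points and hence determines (and is convergence-determining for) laws on this space. The only difference is one of packaging: you carry out the Prohorov/subsequence extraction and the $C^1$-approximation explicitly, where the paper delegates the convergence-determining step to Lemma~4.3 and Theorem~4.5 of Ethier--Kurtz and disposes of the $C^1$ case by noting that the same separation argument applies.
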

\begin{proof}
  Set $E=\Pc(K_\nu\times\Torus)$ and recall that
  $K_\nu\times\Torus$ is a complete compact metric space,
  therefore $E$ and $\Pc(E)$ are complete compact
  (thus separable) metric spaces for the topology
  of weak convergence.
  
  For every $\psi\in C(K_\nu\times\Torus)$ define
  $\Phi_\psi\in C(E)$ as $\Phi_\psi(\mu)=\int\psi\,d\mu$.
  Consider the subset
  \[
    \Mc
      = \{\e^{\im\Phi_\psi}:\psi\in C(K_\nu\times\Torus)\}
  \]
  of $C(E)$. By Lemma 4.3 and Theorem 4.5
  of \cite{EthKur1986}, it is sufficient to prove 
  that $\Mc$ is an algebra that separates the points of $E$.
  It is straightforward to check that $\Mc$ is an algebra.
  To prove that $\Mc$ separates points, consider
  $\mu,\nu\in E$ with $\Psi(\mu)=\Psi(\nu)$ for all
  $\Psi\in\Mc$. This reads
  \[
    \e^{\im\scalar{\psi,\mu}}
      = \e^{\im\scalar{\psi,\nu}}
  \]
  for all $\psi\in C(K_\nu\times\Torus)$. This readily
  implies that $\mu=\nu$. Likewise if
  $\psi\in C^1(K_\nu\times\Torus)$.
\end{proof}
In the proof of our limit theorems we will
streamline and adapt to our setting an idea
from \cite{BenPicPul1987}. The key point is
to give a representation of the equilibrium
measure density in terms of a Gaussian random
field. Here the condition $\beta>0$ is crucial.
\begin{lemma}\label{l:gaussian_rep}
  Let $(x_1,x_2,\dots,x_N)\in \Torus^N$ be $N$ distinct points,
  and let $\gamma_1,\gamma_2,\dots,\gamma_N\in K_\nu$.
  Then
  \[
    \e^{-\frac\beta{N}H_N^\epsilon(x^N,\gamma^N)}
      = \E_{U_{\beta,\epsilon}}\bigl[\e^{\frac\im{\sqrt N}
        \sum_{j=1}^N\gamma_j U_{\beta,\epsilon}(x_j)}\bigr]
        \e^{\frac1{2N}\beta G_{m,\epsilon}(0,0)
          \sum_{j=1}^N \gamma_j^2},
  \]
  where $U_{\beta,\epsilon}$ is the periodic mean zero Gaussian
  random field on the torus  with covariance
  $\beta G_{m,\epsilon}$, and $\E_{U_{\beta,\epsilon}}$
  denotes expectation with respect to the probability
  framework on which $U_{\beta,\epsilon}$ is
  defined.
\end{lemma}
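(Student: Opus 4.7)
The plan is to recognize this identity as essentially the characteristic function formula for a Gaussian random vector, combined with a diagonal/off-diagonal decomposition of the double sum defining $H_N^\epsilon$.

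First I would verify that the random field $U_{\beta,\epsilon}$ actually exists. Since $G_{m,\epsilon}$ is given by the absolutely convergent series \eqref{eq:greenreg} with strictly positive coefficients $\lambda_k^{-m/2}\e^{-\epsilon\lambda_k}$, the kernel $\beta G_{m,\epsilon}$ is continuous, translation invariant and positive definite on $\Torus$, so a centred Gaussian field with this covariance exists (one can, for instance, realise it concretely as $U_{\beta,\epsilon}=\sqrt{\beta}\sum_k \lambda_k^{-m/4}\e^{-\epsilon\lambda_k/2}\xi_k e_k$ with $(\xi_k)$ i.i.d.\ standard Gaussian). Translation invariance yields in particular that $G_{m,\epsilon}(x,x)=G_{m,\epsilon}(0,0)$ for every $x\in\Torus$.

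Next, I would fix the points $x_1,\dots,x_N$ and intensities $\gamma_1,\dots,\gamma_N$ and consider the real Gaussian random variable
\[
  Y\eqdef\frac{1}{\sqrt{N}}\sum_{j=1}^N \gamma_j U_{\beta,\epsilon}(x_j).
\]
It is centred, and by bilinearity of the covariance its variance is
\[
  \operatorname{Var}(Y)=\frac{\beta}{N}\sum_{j,k=1}^N \gamma_j\gamma_k G_{m,\epsilon}(x_j,x_k).
\]
The standard formula $\E[\e^{\im Y}]=\e^{-\frac12\operatorname{Var}(Y)}$ for Gaussians then gives an explicit expression for the left-hand expectation in the lemma.

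It only remains to split the double sum into the off-diagonal part and the diagonal part. Using $G_{m,\epsilon}(x_j,x_j)=G_{m,\epsilon}(0,0)$ we get
\[
  \sum_{j,k=1}^N\gamma_j\gamma_k G_{m,\epsilon}(x_j,x_k)
    =\sum_{j\neq k}\gamma_j\gamma_k G_{m,\epsilon}(x_j,x_k)
      +G_{m,\epsilon}(0,0)\sum_{j=1}^N\gamma_j^2
    =2H_N^\epsilon(\gamma^N,X^N)+G_{m,\epsilon}(0,0)\sum_{j=1}^N\gamma_j^2.
\]
Substituting into the Gaussian characteristic function formula and rearranging yields exactly the claimed identity. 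There is essentially no obstacle: the only point requiring any care is the existence/positive definiteness of $U_{\beta,\epsilon}$ (where $\beta>0$ is crucial, as already emphasised in the text) and the identification of the diagonal of the covariance with $G_{m,\epsilon}(0,0)$; once these are observed, the proof is a direct computation.
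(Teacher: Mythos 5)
Your proof is correct and follows exactly the paper's own argument: identify $(U_{\beta,\epsilon}(x_1),\dots,U_{\beta,\epsilon}(x_N))$ as a centred Gaussian vector with covariance $(\beta G_{m,\epsilon}(x_j,x_k))_{j,k}$, apply the Gaussian characteristic function formula, and split off the diagonal using translation invariance $G_{m,\epsilon}(x,x)=G_{m,\epsilon}(0,0)$. The only difference is that you spell out the existence of the field and the diagonal/off-diagonal bookkeeping, which the paper leaves implicit.
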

\begin{proof}
  The proof is elementary, since by definition
  of the random field $U_{\beta,\epsilon}$,
  the random vector $(U_{\beta,\epsilon}(x_1),
  U_{\beta,\epsilon}(x_2),\dots,U_{\beta,\epsilon}(x_N))$
  is centred Gaussian with covariance matrix
  $(\beta G_{m,\epsilon}(x_j,x_k))_{j,k=1,2,\dots,N}$.
  Notice finally that by translation invariance,
  $G_{m,\epsilon}(x,x)=G_{m,\epsilon}(0,0)$.
\end{proof}
\begin{lemma}\label{l:bernstein}
  Assume there are a sequence of {i.\,i.\,d.} real random variables
  $(X_k)_{k\geq1}$ such that there is $M>0$ with $0\leq X_k\leq M$
  for all $k$, and a sequence of complex random variables
  $(Y_k)_{k\geq1}$ such that $\E Y_k\to L$, {a.\,s.} and
  $|Y_k|\leq M$ for all $k$.
  Set $S_n=\frac1n\sum_{k=1}^n X_k$, $S=\E[X_1]$.
  
  If $F_n:[-S,M]\to\R$ is a sequence of functions such that
  there is $\alpha<\frac14$ with
  \begin{itemize}
    \item $1=F_n(0)\leq F_n(y)\leq \e^{c_0 n^{2\alpha}}$ for all $y\in[-S,M]$,
    \item $\Bc_\delta\eqdef\sup_{|y|\leq\delta,n\geq1} F_n(n^{-\alpha}y)
      \longrightarrow1$ as $\delta\to0$,
  \end{itemize}
  then
  \[
    \E[F_n(S_n-S)Y_n]
      \longrightarrow L,
  \]
  as $n\to\infty$.
\end{lemma}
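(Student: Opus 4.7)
The strategy is a standard concentration-versus-growth dichotomy: split the expectation according to whether $S_n-S$ is in a small ball around $0$ of radius $\delta\,n^{-\alpha}$, use the uniform control of $F_n$ on that ball, and use the Hoeffding/Bernstein exponential concentration of $S_n-S$ (together with the upper bound $F_n\le\e^{c_0 n^{2\alpha}}$) on the complement. The hypothesis $\alpha<\frac14$ enters precisely to beat the worst-case growth of $F_n$ by the Gaussian-type concentration of $S_n$.

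More precisely, fix $\delta\in(0,S\wedge M)$ small and introduce the event
\[
  A_n
    \eqdef\{|S_n-S|\le\delta n^{-\alpha}\}.
\]
Since $0\le X_k\le M$, Hoeffding's inequality gives constants $c>0$ and $C>0$ (independent of $\delta$ and $n$) such that
\[
  \Prob(A_n^c)
    \le C\e^{-c\,\delta^2 n^{1-2\alpha}}.
\]
Using $|Y_n|\le M$ and the pointwise bound $F_n\le\e^{c_0 n^{2\alpha}}$,
\[
  \bigl|\E[F_n(S_n-S)Y_n\uno_{A_n^c}]\bigr|
    \le CM\,\e^{c_0 n^{2\alpha}-c\,\delta^2 n^{1-2\alpha}},
\]
which vanishes as $n\to\infty$ because $\alpha<\tfrac14$ means $2\alpha<1-2\alpha$, so the negative exponent dominates for every fixed $\delta>0$.

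On $A_n$, writing $S_n-S=n^{-\alpha}y$ with $|y|\le\delta$, the second hypothesis on $F_n$ gives $1\le F_n(S_n-S)\le\Bc_\delta$. Hence
\[
  \bigl|\E[F_n(S_n-S)Y_n\uno_{A_n}]-\E[Y_n\uno_{A_n}]\bigr|
    \le M(\Bc_\delta-1).
\]
Combined with $\E[Y_n\uno_{A_n}]=\E Y_n-\E[Y_n\uno_{A_n^c}]$, the bound $|\E[Y_n\uno_{A_n^c}]|\le M\Prob(A_n^c)\to0$, and $\E Y_n\to L$, one obtains
\[
  \limsup_{n\to\infty}\bigl|\E[F_n(S_n-S)Y_n]-L\bigr|
    \le M(\Bc_\delta-1).
\]
Letting $\delta\downarrow0$ and invoking $\Bc_\delta\to1$ concludes the proof. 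The only delicate point is the quantitative tradeoff in the tail term: it is there that the assumption $\alpha<\frac14$ is sharp and cannot be relaxed without strengthening the growth bound on $F_n$.
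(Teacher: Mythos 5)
Your proof is correct and follows essentially the same route as the paper: split on the event $\{|S_n-S|\le\delta n^{-\alpha}\}$, kill the tail by exponential concentration of bounded i.i.d.\ sums against the $\e^{c_0 n^{2\alpha}}$ growth (this is where $\alpha<\frac14$ enters), control the main term via $\Bc_\delta$, and send $n\to\infty$ before $\delta\to0$. The only cosmetic difference is that the paper allows a general localization exponent $\beta\in[\alpha,\frac12(1-2\alpha))$ where you take $\beta=\alpha$, and it cites Bernstein where you cite Hoeffding; both choices are equivalent here.
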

\begin{proof}
  Choose $\beta$ such that $\alpha\leq\beta<\frac12(1-2\alpha)$,
  fix $\delta>0$ and set
  \[
    A_n
      \eqdef\{n^\beta|S_n-S|\leq\delta\}.
  \]
  By the Bernstein inequality there is $c_1>0$ such that
  \begin{equation}\label{e:bernstein}
    \Prob[A_n^c]
      \leq \e^{-c_1 n^{1-2\beta}}.
  \end{equation}
  In particular, $n^\beta(S_n-S)\to0$ {a.\,s.}. Now,
  \[
    \E[F_n(S_n-S)Y_n]
  = \E[F_n(S_n-S)Y_n\uno_{A_n}] + \E[F_n(S_n-S)Y_n\uno_{A_n^c}]
  \qedef \memo{i} + \memo{o}.
  \]
  First, using the first assumption on $F_n$ and \eqref{e:bernstein},
  \[
    \memo{o}
      \leq M\e^{c_0 n^{2\alpha}}\Prob[A_n^c]
      \leq M\e^{c_0 n^{2\alpha}-c_1 n^{1-2\beta}}
      \longrightarrow0,
  \]
  by the choice of $\beta$. For the other term, let
  $\theta_\delta(y)=(y\wedge\delta)\vee(-\delta)$, then
  (recall that $\alpha\leq\beta$),
  \[
  \begin{aligned}
    \memo{i}
      &= \E[F_n(n^{-\alpha}\theta_\delta(n^\alpha(S_n-S)))Y_n\uno_{A_n}]\\
      &= \E\bigl[\bigl(F_n(n^{-\alpha}\theta_\delta(n^\alpha(S_n-S)))-1\bigr)
        Y_n\uno_{A_n}\bigr]
        + \E[Y_n\uno_{A_n^c}].
  \end{aligned}
  \]
  By \eqref{e:bernstein}, $\E[Y_n\uno_{A_n^c}]\to L$, moreover,
  \[
    \bigl|\E\bigl[\bigl(F_n(n^{-\alpha}\theta_\delta(n^\alpha(S_n-S)))-1\bigr)
        Y_n\uno_{A_n}\bigr]\bigr|
      \leq M(\Bc_\delta-1)
  \]
  and $\Bc_\delta\to1$ as $\delta\to0$ by the second
  assumption. The conclusion follows by first taking the limit
  in $n$, and then the limit in $\delta$.
\end{proof}
%%
%%%%%%%%%%%%%%%%%%%%%%%%%%%%%%%%%%%%%%%%%%%%%%%%%%%%%%%%%%
\subsection{Proof of Theorem~\ref{t:chaos}}\label{s:chaos}

This section contains the proof of convergence
of finite dimensional distributions of
the equilibrium measure \eqref{e:reg_mu}.
To this end it is sufficient to prove
convergence of the characteristic functions.

Fix $n\geq1$, and assume $N\gg n$.
By exchangeability of the measure
$\mu^N_{\beta,\epsilon}$, it is sufficient to
focus on the first $n$ vortices
$(\gamma_1,X_1),\dots,(\gamma_n,X_n)$.
We have dropped here for simplicity the
superscript ${}^N$.
Fix $a=(a_1,a_2,\dots,a_n)\in\R^k$
and $b=(b_1,b_2,\dots,b_n)\in(\R^2)^k$,
we will write
$a\cdot\gamma$ as a shorthand
for $a_1\gamma_1+\dots+a_n\gamma_n$,
as well as $b\cdot X$ as a shorthand
for $b_1\cdot X_1+\dots+b_n\cdot X_n$.

With these positions,
\[
  \E_{\mu_{\beta,\epsilon}^N}\bigl[\e^{\im(a\cdot\gamma+b\cdot X)}\bigr]
    = \frac{1}{Z_{\beta,\epsilon}^N}\int\ldots\int
      \e^{\im(a\cdot\gamma + b\cdot x)}
      \e^{-\frac\beta{N}H_N^\epsilon(\gamma,x)}
        \,d\ell^{\otimes N}\,d\nu^{\otimes N}.
\]
By using Lemma~\ref{l:gaussian_rep}, the formula above
can be re-written as
\[
  \begin{multlined}
    \E_{\mu_{\beta,\epsilon}^N}\bigl[\e^{\im(a\cdot\gamma+b\cdot X)}\bigr]=\\
      = \frac1{Z_{\beta,\epsilon}^N}\int\dots\int
        \E_{U_{\beta,\epsilon}^N}\bigl[\e^{\im(a\cdot\gamma+b\cdot x)
          + \frac{\im}{\sqrt N}\sum_{j=1}^N\gamma_j U_{\beta,\epsilon}(x_j)}\bigr]
          \e^{\frac12\beta\Gamma_N G_{m,\epsilon}(0,0)}
          \,d\ell^{\otimes N}\,d\nu^{\otimes N},
  \end{multlined}
\]
where
\begin{equation}\label{e:gammaN}
  \Gamma_N
    \eqdef\frac1N\sum_{j=1}^N \gamma_j^2,
\end{equation}
that by the law of large numbers converges
in probability to $\E_\nu[\gamma^2]=\Gamma_\infty$.

We write now the space integral of the expectation in the integral
above in a more compact way, to make our computations easier.
Extend the vector $b$ to $0$, in the sense that $b_j=0$
if $j\geq n+1$, and set 
  \[
    \begin{aligned}
      A_{\epsilon j}^N(b)
        &\eqdef\int_\Torus \e^{\im \bigl(b_j\cdot x_j
          + \frac{\gamma_j}{\sqrt N} U_{\beta\epsilon}(x_j)\bigr)}\,d\ell,\\
      B_{\epsilon j}^N(b)
        &\eqdef \e^{-\frac{\gamma_j^2}{2N}\bigl\|
          U_{\beta\epsilon}\bigr\|_{L^2(\ell)}^2}
          \int_\Torus\e^{\im b_j\cdot x_j}\,d\ell,\\
      D_{\epsilon j}^N(b)
        &\eqdef A_{\epsilon j}^N(b) - B_{\epsilon j}^N(b).
\end{aligned}
\]
We thus have
\[
%  \begin{multlined}
    \E_{\mu_{\beta,\epsilon}^N}\bigl[\e^{\im(a\cdot\gamma+b\cdot X)}\bigr]% =\\
      = \frac1{Z_{\beta,\epsilon}^N}\int\ldots\int
        \e^{\im a\cdot\gamma}\E_{U_{\beta,\epsilon}^N}\Bigl[
        \prod_{j=1}^N A_{\epsilon j}^N(b)\Bigr]
        \e^{\frac12\beta\Gamma_N G_{m,\epsilon}(0,0)}
        \,d\nu^{\otimes N},
%  \end{multlined}
\]
Since we have the straightforward decomposition
\[
  \prod_{j=1}^N A_{\epsilon j}^N(b)
    = \prod_{j=1}^N B_{\epsilon j}^N(b)
      + \sum_{k=1}^N\Bigl(\prod_{j=1}^{k-1} A_{\epsilon j}^N(b)\Bigr)
      \cdot D_{\epsilon k}^N(b)\cdot
      \Bigl(\prod_{j=k+1}^N B_{\epsilon j}^N(b)\Bigr),
\]
if we also set
\[
  \Lc(a,b)
    \eqdef\int\dots\int
      \e^{\frac12\beta(\Gamma_N-\Gamma_\infty)G_{m,\epsilon}(0,0)}
      \e^{\im a\cdot\gamma}
      \E_{U_{\beta\epsilon}}\Bigl[\prod_{j=1}^N B_{\epsilon j}^N(b)\Bigr]
      \,d\nu^{\otimes N},
\]
and
\[
  \begin{multlined}[.9\linewidth]
    \Ec(a,b)
      \eqdef\int\dots\int
      \e^{\frac12\beta(\Gamma_N-\Gamma_\infty)G_{m,\epsilon}(0,0)}
      \e^{\im a\cdot\gamma}\cdot\\
        \cdot\E_{U_{\beta\epsilon}}\Bigl[
        \sum_{k=1}^N\Bigl(\prod_{j=1}^{k-1} A_{\epsilon j}^N(b)\Bigr)
        D_{\epsilon k}^N(b)
        \Bigl(\prod_{j=k+1}^N B_{\epsilon j}^N(b)\Bigr)
        \Bigr]
        \,d\nu^{\otimes N},
  \end{multlined}
\]
we have that
\[
  \E_{\mu_{\beta,\epsilon}^N}\bigl[\e^{\im(a\cdot\gamma+b\cdot X)}\bigr]
    = \frac1{Z_{\beta\epsilon}^N}
      \e^{\frac12\beta\Gamma_\infty G_{m,\epsilon}(0,0)}
      \bigl(\Lc(a,b) + \Ec(a,b)\bigr)
\]
If in particular we take $a=b=0$, we obtain an analogous formula
for the partition function,
\[
  Z_{\beta\epsilon}^N
    = \e^{\frac12\beta\Gamma_\infty G_{m,\epsilon}(0,0)}
      \bigl(\Lc(0,0) + \Ec(0,0)\bigr),
\]
and in conclusion
\[
  \begin{aligned}
    \E_{\mu_{\beta,\epsilon}^N}\bigl[\e^{\im(a\cdot\gamma+b\cdot X)}\bigr]
      &= \frac{\Lc(a,b) + \Ec(a,b)}{\Lc(0,0) + \Ec(0,0)}\\
      &= \Bigl(\frac{\frac{\Lc(a,b)}{\Lc(0,0)}}{1 + \frac{\Ec(a,b)}{\Lc(0,0)}}
        + \frac{\frac{\Ec(a,b)}{\Lc(0,0)}}{1 + \frac{\Ec(a,b)}{\Lc(0,0)}}
        \Bigr).
  \end{aligned}
\]
It is sufficient now to prove that
\begin{equation}\label{e:fddclaim}
  \begin{gathered}
    \frac{\Lc(a,b)}{\Lc(0,0)}
      \longrightarrow\int\dots\int \e^{\im a\cdot\gamma+\im b\cdot x}
        \,d\ell^{\otimes n}\,d\nu^{\otimes n},\\
    \frac{\Ec(a,b)}{\Lc(0,0)}
      \longrightarrow 0,
  \end{gathered}
\end{equation}
as $N\uparrow\infty$, $\epsilon=\epsilon(N)\downarrow0$,
for all $a$ and $b$.

We first analyze $\Lc(a,b)/\Lc(0,0)$. If
$(U_{\beta,\epsilon,k})_{k\geq1}$
are the
components of $U_{\beta,\epsilon}$
with respect to the
eigenvectors $e_1,e_2,\dots$,
we notice that
$(U_{\beta,\epsilon,k})_{k\geq1}$
are independent centred Gaussian
random variables, and for each $k$,
$U_{\beta,\epsilon,k}$ has
variance $\beta g^\epsilon_k$,
where we have set for brevity
$g^\epsilon_k\eqdef \lambda_k^{-m/2}\e^{-\epsilon\lambda_k}$.
Therefore, by Plancherel,
\[
  \frac12\sum_{j=1}^N\|\tfrac{\gamma_j}{\sqrt N}U_{\beta,\epsilon}\|_{L^2(\ell)}^2
    = \frac12\Gamma_N\sum_{k=1}^\infty U_{\beta,\epsilon,k}^2,
\]
and by independence,
\[
  \begin{aligned}
    \E_{U_{\beta\epsilon}}\Bigl[\prod_{j=1}^N B_{\epsilon j}^N(b)\Bigr]
      &= \Bigl(\prod_{j=1}^n\int_\Torus\e^{\im b_j\cdot x_j}\,d\ell\Bigr)
        \E_{U_{\beta\epsilon}}\Bigl[\e^{-\frac12\Gamma_N
        \sum_{k=1}^\infty U_{\beta,\epsilon,k}^2}\Bigr]\\
      &= \Bigl(\prod_{j=1}^n\int_\Torus\e^{\im b_j\cdot x_j}\,d\ell\Bigr)
        \prod_{k=1}^\infty\E_{U_{\beta\epsilon}}\Bigl[
        \e^{-\frac12\Gamma_N U_{\beta,\epsilon,k}^2}\Bigr]\\
      &= \Bigl(\prod_{j=1}^n\int_\Torus\e^{\im b_j\cdot x_j}\,d\ell\Bigr)
        \prod_{k=1}^\infty\frac1{(1+\beta g^\epsilon_k\Gamma_N)^\frac12}
  \end{aligned}
\]
using elementary Gaussian integration.
Thus we have
\begin{equation}\label{e:elleab}
  \begin{multlined}[.8\linewidth]
  \Lc(a,b)
    = \Bigl(\prod_{j=1}^n\int_\Torus\e^{\im b_j\cdot x_j}\,d\ell\Bigr)
      \Bigl(\prod_{k=1}^\infty
        \frac1{(1+\beta g^\epsilon_k\Gamma_\infty)^\frac12}\Bigr)\cdot\\
      \cdot\int\dots\int
      \e^{\im a\cdot\gamma}F_N(\Gamma_N-\Gamma_\infty)
      \,d\nu^{\otimes N},
  \end{multlined}
\end{equation}
where
\begin{equation}\label{e:effeenne}
  F_N(X)
    = \e^{\frac12\beta X G_{m,\epsilon}(0,0)}
      \prod_{k=1}^\infty\Bigl(1 +
        \frac{\beta g^\epsilon_k}
        {1+\beta g^\epsilon_k\Gamma_\infty}X\Bigr)^{-\frac12}.
\end{equation}
If we prove that $F_N$ meets the assumptions of
Lemma~\ref{l:bernstein}, we immediately have
the first claim of \eqref{e:fddclaim}.
Indeed, set
\[
  c_k
    = \frac{\beta g^\epsilon_k}{1+\beta g^\epsilon_k\Gamma_\infty},
\]
then, by using the elementary inequality
$\log(1+x)\geq x-\frac12x^2$,
\[
  \begin{aligned}
    2\log F_N(x)
      &= \beta G_{m,\epsilon}(0,0)x
         - \sum_{k=1}^\infty\log(1+c_k x)\\
      &\leq \Bigl(\beta G_{m,\epsilon}(0,0) - \sum_{k=1}^\infty c_k\Bigr)x
        + \frac12 \Bigl(\sum_{k=1}^\infty c_k^2\Bigr)x^2\\
      &\leq \Bigl(\beta G_{m,\epsilon}(0,0) - \sum_{k=1}^\infty c_k\Bigr)x
        + \frac12 \Bigl(\sum_{k=1}^\infty c_k\Bigr)^2 x^2.
  \end{aligned}
\]
Since 
\[
  0
    \leq \sum_k c_k
    \leq \beta\sum_k g^\epsilon_k
    = \beta G_{m,\epsilon}(0,0),
  \]
both assumptions of the lemma hold if
there is $\alpha<\frac14$ such that
$G_{m,\epsilon}(0,0)\lesssim N^\alpha$.
On the other hand, it is elementary
to see that
\begin{equation}\label{e:gzero}
  G_{m,\epsilon}(0,0)
    = \sum_{k=1}^\infty g^\epsilon_k
    = \sum_{k=1} \lambda_k^{-\frac{m}2}\e^{-\epsilon\lambda_k}
    \approx\epsilon^{-\frac12(2-m)},
\end{equation}
since $\lambda_k\sim k$,
therefore our choice of $\epsilon=\epsilon(N)$ is sufficient
to ensure the assumptions of Lemma~\ref{l:bernstein}
for $F_N$. 

We turn to the analysis of $\Ec(a,b)/\Lc(0,0)$.
By their definition, we have that
$|A_{\epsilon j}^N(b)|\leq 1$ and
$|B_{\epsilon j}^N(b)|\leq 1$,
therefore,
\[
  \Bigl|\E_{U_{\beta\epsilon}}\Bigl[
      \sum_{k=1}^N\Bigl(\prod_{j=1}^{k-1} A_{\epsilon j}^N(b)\Bigr)
      D_{\epsilon k}^N(b)
      \Bigl(\prod_{j=k+1}^N B_{\epsilon j}^N(b)\Bigr)
      \Bigr]\Bigr|
    \leq \sum_{k=1}^N \E_{U_{\beta\epsilon}}[|D_{\epsilon k}^N(b)|],
\]
and it remains to estimate the expectations of the terms
$|D_{\epsilon j}^N(b)|$.

If $j\leq n$,
\[
  \begin{aligned}
    |D_{\epsilon j}^N(b)|
      &\leq \int_\Torus \bigl|
        \e^{\frac{\gamma_j}{\sqrt N} U_{\beta\epsilon}(x_j)}
        - \e^{-\frac{\gamma_j^2}{2N}\bigl\|U_{\beta\epsilon}\bigr\|_{L^2(\ell)}^2}
        \bigr|\,d\ell\\
      &\leq\frac{|\gamma_j|}{\sqrt N}\int_\Torus
        |U_{\beta\epsilon}(x_j)+\|U_{\beta\epsilon}\|_{L^2(\ell)}|\,d\ell\\
      &\lesssim\frac1{\sqrt N}\|U_{\beta\epsilon}\|_{L^2(\ell)},
  \end{aligned}
\]
where we have used the elementary inequalities
$|\e^{\im x}-1|\leq|x|$ and $1-\e^{-\frac12 y^2}\leq |y|$.
Thus for $j\leq n$,
\[
  \E_{U_{\beta\epsilon}}[|D_{\epsilon j}^N(b)|]
    \lesssim \frac1{\sqrt N}\E[\|U_{\beta\epsilon}\|_{L^2(\ell)}^2]^\frac12
    \lesssim \frac1{\sqrt N}\sqrt{G_{m,\epsilon}(0,0)},
\]
since $U_{\beta\epsilon}$ is a Gaussian random field with
covariance $\beta G_{m,\epsilon}$.

If on the other hand $j\geq n+1$, by Lemma~\ref{l:approx},
\[
  \begin{aligned}
    \E_{U_{\beta\epsilon}}[|D_{\epsilon j}^N(b)|]
      &\lesssim\frac1{N^{3/2}}\E_{U_{\beta\epsilon}}
        \bigl[\bigl\|\gamma_j U_{\beta\epsilon}
        \bigr\|_{L^3(\ell)}^3\bigr]\\
      &\leq \frac1{N^{3/2}}\bigl(\E_{U_{\beta\epsilon}}
        \bigl[\bigl\|\gamma_j U_{\beta\epsilon}
        \bigr\|_{L^4(\ell)}^4\bigr]\bigr)^{\frac34}\\
      &\lesssim\frac1{N^{3/2}}G_{m,\epsilon}(0,0)^{\frac32},
  \end{aligned}
\]
since
\begin{equation}\label{e:fourth}
  \begin{multlined}[.8\linewidth]
  \E_{U_{\beta,\epsilon}}[\|U_{\beta,\epsilon}\|_{L^4(\ell)}^4]
    = \int_\Torus \E[U_{\beta,\epsilon}(x)^4]\,d\ell =\\
    = \int_\Torus 3\beta^2 G_{m,\epsilon}(x,x)^2\,d\ell
    = 3\beta^2 G_{m,\epsilon}(0,0).
  \end{multlined}
\end{equation}

In conclusion
\[
  \begin{aligned}
    \Ec(a,b)
      &\leq \int\dots\int
        \e^{\frac12\beta(\Gamma_N-\Gamma_\infty)G_{m,\epsilon}(0,0)}
        \sum_{k=1}^N \E_{U_{\beta\epsilon}}[|D_{\epsilon k}^N(b)|]
        \,d\nu^{\otimes N}\\
      &\leq \Bigl(\frac{n}{\sqrt N}\sqrt{G_{m,\epsilon}(0,0)}
        + \frac{N-n}{N^{\frac32}}G_{m,\epsilon}(0,0)^{\frac32}\Bigr)
        \Ec_0\\
      &\lesssim\frac1{\sqrt N}(1+G_{m,\epsilon}(0,0)^{\frac32})\Ec_0,
  \end{aligned}
\]
where we have set for brevity
\begin{equation}\label{e:Ezero}
  \Ec_0
    \eqdef\int\dots\int
      \e^{\frac12\beta(\Gamma_N-\Gamma_\infty)G_{m,\epsilon}(0,0)}
      \,d\nu^{\otimes N}.
\end{equation}
It is easy to see that by Lemma~\ref{l:bernstein}, $\Ec_0\to1$.
Moreover, by our previous computations, see \eqref{e:elleab},
we can write $\Lc(0,0)$ as
\[
  \Lc(0,0)
    = \Bigl(\prod_{k=1}^\infty\frac1{1+\beta\Gamma_\infty g^\epsilon_k}\Bigr)^{\frac12}
    \Lc_0,
\]
with $\Lc_0\to1$. We have,
\begin{equation}\label{e:Gprod}
  \prod_{k=1}^\infty\frac1{1+\beta\Gamma_\infty g^\epsilon_k}
    = \e^{-\sum_k \log(1+\beta\Gamma_\infty g^\epsilon_k)}
    \geq \e^{-\sum_k \beta\Gamma_\infty g^\epsilon_k}
    = \e^{-\beta\Gamma_\infty G_{m,\epsilon}(0,0)},
\end{equation}
therefore
\[
  \frac{\Ec(a,b)}{\Lc(0,0)}
    \lesssim \frac1{\sqrt N}(1 + G_{m,\epsilon}(0,0)^{\frac32})
      \e^{\beta\Gamma_\infty G_{m,\epsilon}(0,0)}
      \frac{\Ec_0}{\Lc_0}.
\]
So it is sufficient to choose $\epsilon=\epsilon(N)$
so that
\begin{equation}\label{e:condition}
  \frac1{\sqrt N}(1 + G_{m,\epsilon}(0,0)^{\frac32})
      \e^{\beta\Gamma_\infty G_{m,\epsilon}(0,0)}
    \longrightarrow0
\end{equation}
Using \eqref{e:gzero}, we see immediately that
it suffices to choose
$\epsilon^{-\frac12(2-m)}\leq c\log N$, with $c$ small enough.
%%
%%%%%%%%%%%%%%%%%%%%%%%%%%%%%%%%%%%%%%%%%%%%%%
\subsection{Law of large numbers}\label{s:lln}

We turn to the proof of Theorem~\ref{t:lln}
on the weak law of large numbers
for point vortices. We will broadly
follow the same strategy of the previous
section.

First of all, we notice that it is sufficient
to prove convergence in law of $\eta_N$ to
$\ell\otimes\nu$. Moreover, in
view of Lemma~\ref{l:weak_conv}, it is
sufficient to prove convergence of
the characteristic functions over
test functions $\psi\in C(K_\nu\times\Torus)$.
Fix $\psi\in C(K_\nu\times\Torus)$,
then by using Lemma~\ref{l:gaussian_rep},
\[
  \begin{multlined}
    \E_{\mu_{\beta,\epsilon}^N}[\e^{\im\scalar{\psi,\eta_N}}] = \\
      = \frac1{Z_{\beta,\epsilon}^N}
        \int\ldots\int \E_{U_{\beta,\epsilon}^N}\bigl[
        \e^{\frac{\im}{\sqrt N}\sum_{j=1}^N\frac1{\sqrt N}\psi(\gamma_j,x_j)
          + \gamma_j U_{\beta,\epsilon}(x_j)}\bigr]
        \e^{\frac12\beta\Gamma_N G_{m,\epsilon}(0,0)}
        \,d\ell^{\otimes N}\,d\nu^{\otimes N},
  \end{multlined}
\]
where $\Gamma_N$ has been defined in \eqref{e:gammaN}.
We set now some notations to shorten and simplify our formulas.
Let $\ell(\psi)(\gamma)=\int\psi(\gamma,x)\,\ell(dx)$ and
$\phi=\psi-\ell(\psi)$. For a function $a\in C(K_\nu)$,
define
\begin{equation}\label{e:emmenne}
  M_N(a)
    = \frac1N\sum_{j=1}^N a(\gamma_j).
\end{equation}
Set moreover,
\[
  \begin{aligned}
    A_{\epsilon j}^N(\phi)
      &\eqdef\int_\Torus \e^{\frac\im{\sqrt N}
        \bigl(\frac1{\sqrt N}\phi(\gamma_j,x_j)
        +\gamma_j U_{\beta\epsilon}(x_j)\bigr)}\,d\ell,\\
    B_{\epsilon j}^N(\phi)
      &\eqdef \e^{-\frac1{2N}\bigl\|\frac1{\sqrt N}\phi(\gamma_j,\cdot)
        +\gamma_j U_{\beta\epsilon}\bigr\|_{L^2(\ell)}^2},\\
    D_{\epsilon j}^N(\phi)
      &\eqdef A_{\epsilon j}^N(\phi) - B_{\epsilon j}^N(\phi).
  \end{aligned}
\]
We have the decomposition
\begin{equation}\label{e:decompose}
    \prod_{j=1}^N A_{\epsilon j}^N(\phi)
      = \prod_{j=1}^N B_{\epsilon j}^N(\phi)
        + \sum_{k=1}^N\Bigl(\prod_{j=1}^{k-1} A_{\epsilon j}^N(\phi)\Bigr)
        \cdot D_{\epsilon j}^N(\phi)\cdot
        \Bigl(\prod_{j=k+1}^N B_{\epsilon j}^N(\phi)\Bigr).
\end{equation}
If we also set
\[
  \Lc(\psi)
    \eqdef\int\dots\int
      \e^{\frac12\beta(\Gamma_N-\Gamma_\infty)G_{m,\epsilon}(0,0)}
      \e^{\im M_N(\ell(\psi))}
      \E_{U_{\beta\epsilon}}\Bigl[\prod_{j=1}^N B_{\epsilon j}^N(\phi)\Bigr]
      \,d\nu^{\otimes N},
\]
and
\[
  \begin{multlined}[.9\linewidth]
    \Ec(\psi)
      \eqdef\int\dots\int
      \e^{\frac12\beta(\Gamma_N-\Gamma_\infty)G_{m,\epsilon}(0,0)}
        \e^{\im M_N(\ell(\psi))}\cdot\\
        \cdot\E_{U_{\beta\epsilon}}\Bigl[
          \sum_{k=1}^N\Bigl(\prod_{j=1}^{k-1} A_{\epsilon j}^N(\phi)\Bigr)
          D_{\epsilon k}^N(\phi)
          \Bigl(\prod_{j=k+1}^N B_{\epsilon j}^N(\phi)\Bigr)
        \Bigr]
        \,d\nu^{\otimes N},
  \end{multlined}
\]
we have that
\[
  \E_{\mu_{\beta,\epsilon}^N}[\e^{\im\scalar{\psi,\eta_N}}]
    = \frac1{Z_{\beta\epsilon}^N}
      \e^{\frac12\beta\Gamma_\infty G_{m,\epsilon}(0,0)}
      \bigl(\Lc(\psi) + \Ec(\psi)\bigr).
\]
A similar formula can be obtained for $Z_{\beta\epsilon}^N$,
therefore
\[
  \begin{aligned}
    \E_{\mu_{\beta,\epsilon}^N}[\e^{\im\scalar{\psi,\eta_N}}]
      &=\frac{\Lc(\psi) + \Ec(\psi)}{\Lc(0) + \Ec(0)}%\\
  \end{aligned}
\]
It is sufficient now to prove that
\[
  \frac{\Lc(\psi)}{\Lc(0)}
    \longrightarrow\e^{\im\nu\otimes\ell(\psi)}
      \qquad\text{and}\qquad
  \frac{\Ec(\psi)}{\Lc(0)}
    \longrightarrow 0,
\]
as $N\uparrow\infty$, $\epsilon=\epsilon(N)\downarrow0$,
for all $\psi$.

We first analyze $\Lc(\psi)/\Lc(0)$. Let
$(U_{\beta,\epsilon,k})_{k\geq1}$ and
$(\phi_k)_{k\geq1}$ be the
component of $U_{\beta,\epsilon}$ and
$\phi$ with respect to the
eigenvectors $e_1,e_2,\dots$,
and we set again 
$g^\epsilon_k\eqdef\lambda_k^{-m/2}\e^{-\epsilon\lambda_k}$.
By independence and elementary Gaussian integration,
\[
  \begin{aligned}
    \E_{U_{\beta\epsilon}}\Bigl[\prod_{j=1}^N B_{\epsilon j}^N(\phi)\Bigr]
      &=\E_{U_{\beta\epsilon}}\Bigl[\e^{-\frac1{2N}\sum_{j=1}^N
          \|\frac1{\sqrt N}\phi(\gamma,\cdot)+\gamma_j U_{\beta,\epsilon}\|_{L^2(\ell)}^2}
        \Bigr]\\
      &=\e^{-\frac1{2N}M_N(\|\phi\|_{L^2(\ell)}^2)}
        \prod_{k=1}^\infty\E_{U_{\beta,\epsilon}}\Bigl[
        \e^{-\frac1{\sqrt N} M_N(\gamma\phi_k) U_{\beta,\epsilon,k}
          -\frac12\Gamma_N U_{\beta,\epsilon,k}^2}
        \Bigr]\\
      &=\e^{-\frac1{2N}M_N(\|\phi\|_{L^2(\ell)}^2)}
        \prod_{k=1}^\infty\Bigl(\frac1{(1+\beta\Gamma_N g^\epsilon_k)^{\frac12}}
        \e^{\frac1{2N}\frac{\beta M_N(\gamma\phi_k)^2 g^\epsilon_k}
          {1+\beta\Gamma_N g^\epsilon_k}}\Bigr).
  \end{aligned}
\]
Thus we have
\begin{equation}\label{e:elleaphi}
  \Lc(\psi)
    = \Bigl(\prod_{k=1}^\infty\frac1{(1+\beta\Gamma_\infty g^\epsilon_k)^{\frac12}}\Bigr)
      \e^{\im\nu\otimes\ell(\psi)}\Lc_0(\psi),
\end{equation}
where
\begin{equation}\label{e:ellezero}
  \begin{multlined}[.9\linewidth]
  \Lc_0(\psi)
    \eqdef
    \int\dots\int F_N(\Gamma_N-\Gamma_\infty)
      \e^{\im(M_N(\ell(\psi))-\nu\otimes\ell(\psi))}\cdot\\
      \cdot\e^{-\frac1{2N}M_N(\|\phi\|_{L^2(\ell)}^2)}
      \e^{\frac1{2N}\sum_{k=1}^\infty
        \frac{\beta M_N(\gamma\phi_k)^2 g^\epsilon_k}
        {1+\beta\Gamma_N g^\epsilon_k}}
      \,d\nu^{\otimes N},
  \end{multlined}
\end{equation}
and where $F_N$ has been defined in \eqref{e:effeenne}.
Since we look at the ratio $\Lc(\psi)/\Lc(0)$, it is sufficient to prove
that $\Lc_0(\psi)\to1$ as $N\uparrow\infty$ and $\epsilon=\epsilon(N)\downarrow0$,
for all $\psi$. In view of Lemma~\ref{l:bernstein}, we notice that
\[
  \e^{\im(M_N(\ell(\psi))-\nu\otimes\ell(\psi))}
    \longrightarrow 1,
\]
{a.\,s.} by the law of large numbers, and it is bounded. Likewise,
the same holds for
\[
  \e^{-\frac1{2N}M_N(\|\phi\|_{L^2(\ell)}^2)}
    \longrightarrow 1.
\]
Finally, since $M_N(\gamma\phi_k)^2\leq\Gamma_NM_N(\phi_k^2)$,
\begin{equation}\label{e:expolim}
  \sum_{k=1}^\infty
      \frac{\beta M_N(\gamma\phi_k)^2 g^\epsilon_k}
      {1+\beta\Gamma_N g^\epsilon_k}
    \leq \sum_{k=1}^\infty M_N(\phi_k^2)
    = M_N(\|\phi\|_{L^2(\ell)}^2),
\end{equation}
is bounded, we also have that
\[
  \e^{\frac1{2N}\sum_{k=1}^\infty
      \frac{\beta M_N(\gamma\phi_k)^2 g^\epsilon_k}
      {1+\beta\Gamma_N g^\epsilon_k}}
    \longrightarrow 1,
\]
and is bounded.
Since we have proved in the previous section
that $F_N$ meets the assumptions of
Lemma~\ref{l:bernstein}, we conclude
that $\Lc_0(\psi)\to1$.

We turn to the analysis of $\Ec(\psi)/\Lc(0)$.
By Lemma~\ref{l:approx} and formula \eqref{e:fourth},
\[
  \begin{aligned}
    \E_{U_{\beta\epsilon}}[|D_{\epsilon j}^N(\phi)|]
      &\lesssim\frac1{N^{3/2}}\E_{U_{\beta\epsilon}}\bigl[\bigl\|
        \tfrac1{\sqrt N}\phi(\gamma_j,\cdot) + \gamma_j U_{\beta\epsilon}
        \bigr\|_{L^3(\ell)}^3\bigr]\\
      &\leq \frac1{N^{3/2}}\bigl(\E_{U_{\beta\epsilon}}\bigl[\bigl\|
        \tfrac1{\sqrt N}\phi(\gamma_j,\cdot) + \gamma_j U_{\beta\epsilon}
        \bigr\|_{L^4(\ell)}^4\bigr]\bigr)^{\frac34}\\
      &\lesssim\frac1{N^{3/2}}(1 + G_{m,\epsilon}(0,0)^{\frac32}).
  \end{aligned}
\]
Therefore,
\[
  \begin{aligned}
    \E_{U_{\beta\epsilon}}\Bigl[
        \sum_{k=1}^N\Bigl(\prod_{j=1}^{k-1} A_{\epsilon j}^N(\phi)\Bigr)
          D_{\epsilon k}^N(\phi)
          \Bigl(\prod_{j=k+1}^N B_{\epsilon j}^N(\phi)\Bigr)
        \Bigr]
      &\leq \sum_{k=1}^N \E_{U_{\beta\epsilon}}[|D_{\epsilon k}^N(\phi)|]\\
      &\lesssim \frac1{\sqrt N}(1 + G_{m,\epsilon}(0,0)^{\frac32}),
  \end{aligned}
\]
so in conclusion
\[
  \Ec(\psi)
    \lesssim \frac1{\sqrt N}(1 + G_{m,\epsilon}(0,0)^{\frac32})\Ec_0,
\]
where $\Ec_0$ is defined as in \eqref{e:Ezero}, and $\Ec_0\to1$.
Using the expression of $\Lc(0)$ given by \eqref{e:elleaphi},
and formula~\eqref{e:Gprod}, we have
\[
  \frac{\Ec(\psi)}{\Lc(0)}
    \lesssim \frac1{\sqrt N}(1 + G_{m,\epsilon}(0,0)^{\frac32})
      \e^{\beta\Gamma_\infty G_{m,\epsilon}(0,0)}
      \frac{\Ec_0}{\Lc_0(0)}.
\]
As in formula~\eqref{e:condition}, by our assumption
on $\epsilon=\epsilon(N)$, the right-hand side converges
to $0$.
%%
%%%%%%%%%%%%%%%%%%%%%%%%%%%%%%%%%%%%%%%%%%%%%%%
\subsection{Central limit theorem}\label{s:clt}

We finally turn to the proof of Theorem~\ref{t:clt}
on the fluctuations of point vortices.
Again, the strategy is the same of the
previous sections.
By Lemma~\ref{l:weak_conv}, it
suffices to prove convergence of
the characteristic functions over
test functions $\psi$, with
$\psi\in C^1(K_\nu\times\Torus)$,
namely to prove that
\[
  \E_{\mu_{\beta,\epsilon}^N}[\e^{\im\scalar{\psi,\zeta_N}}]
    \longrightarrow\e^{-\frac12\sigma_\infty(\psi)^2}
\]
To this end fix $\psi\in C^1(K_\nu\times\Torus)$,
and use the same notations of the previous section,
namely $\phi=\psi-\ell(\psi)$ and the operator $M_N$
defined in \eqref{e:emmenne}. Let $(\phi_k)_{k\geq1}$ and
$(G_{m,k})_{k\geq1}$ be the Fourier coefficients of $\phi$
and $G_m$ with respect to the basis of eigenvectors
$e_1,e_2,\dots$. It is an elementary computation that
\begin{equation}\label{e:sigmainfty}
  \sigma_\infty(\psi)^2
    = \nu(\ell(\psi)^2) - \nu\otimes\ell(\psi)
      + \|\phi\|_{L^2(\nu\otimes\ell)}^2
      -\beta\sum_{k=1}^\infty
        \frac{G_{m,k}\nu(\gamma\phi_k)^2}{1+\beta\Gamma_\infty G_{m,k}}.
\end{equation}

By using Lemma~\ref{l:gaussian_rep},
\[
  \begin{multlined}[.9\linewidth]
    \E_{\mu_{\beta,\epsilon}^N}[\e^{\im\scalar{\psi,\zeta_N}}]
      = \frac1{Z_{\beta,\epsilon}^N}\int\ldots\int
        \e^{\im\sqrt{N}(M_N(\ell(\psi))-\nu\otimes\ell(\psi))}
        \e^{\frac12\beta\Gamma_N G_{m,\epsilon}(0,0)}\cdot\\
        \cdot\E_{U_{\beta,\epsilon}^N}\bigl[
        \e^{\frac{\im}{\sqrt N}\sum_{j=1}^N \phi(\gamma_j,x_j)
          + \gamma_j U_{\beta,\epsilon}(x_j)}\bigr]
        \,d\ell^{\otimes N}\,d\nu^{\otimes N},
  \end{multlined}
\]
where $\Gamma_N$ is defined in \eqref{e:gammaN}.
With the positions
\[
  \begin{aligned}
    A_{\epsilon j}^N(\phi)
      &\eqdef\int_\Torus \e^{\frac\im{\sqrt N}
        \bigl(\phi(\gamma_j,x_j)
        +\gamma_j U_{\beta\epsilon}(x_j)\bigr)}\,d\ell,\\
    B_{\epsilon j}^N(\phi)
      &\eqdef \e^{-\frac1{2N}\bigl\|\phi(\gamma_j,\cdot)
        +\gamma_j U_{\beta\epsilon}\bigr\|_{L^2(\ell)}^2},\\
    D_{\epsilon j}^N(\phi)
      &\eqdef A_{\epsilon j}^N(\phi) - B_{\epsilon j}^N(\phi).
  \end{aligned}
\]
the decomposition~\eqref{e:decompose} still holds.
Set also
\[
  \begin{gathered}
  E_N(\psi)
    = \e^{\im\sqrt{N}(M_N(\ell(\psi))-\nu\otimes\ell(\psi))},\\
  \Lc(\psi)
    \eqdef\int\dots\int
      \e^{\frac12\beta(\Gamma_N-\Gamma_\infty)G_{m,\epsilon}(0,0)}
      E_N(\psi)
      \E_{U_{\beta\epsilon}}\Bigl[\prod_{j=1}^N B_{\epsilon j}^N(\phi)\Bigr]
      \,d\nu^{\otimes N},
  \end{gathered}
\]
and
\[
  \begin{multlined}[.9\linewidth]
    \Ec(\psi)
      \eqdef\int\dots\int
        \e^{\frac12\beta(\Gamma_N-\Gamma_\infty)G_{m,\epsilon}(0,0)}
        E_N(\psi)\cdot\\
        \cdot\E_{U_{\beta\epsilon}}\Bigl[
        \sum_{k=1}^N\Bigl(\prod_{j=1}^{k-1} A_{\epsilon j}^N(\phi)\Bigr)
        D_{\epsilon k}^N(\phi)
        \Bigl(\prod_{j=k+1}^N B_{\epsilon j}^N(\phi)\Bigr)
        \Bigr]
        \,d\nu^{\otimes N},
  \end{multlined}
\]
then, as in the previous sections,
\[
  \E_{\mu_{\beta,\epsilon}^N}[\e^{\im\scalar{\psi,\eta_N}}]
    =\frac{\Lc(\psi) + \Ec(\psi)}{\Lc(0) + \Ec(0)},
\]
and it is sufficient now to prove that
\[
  \frac{\Lc(\psi)}{\Lc(0)}
    \longrightarrow\e^{-\frac12\sigma_\infty(\psi)^2}
      \qquad\text{and}\qquad
  \frac{\Ec(\psi)}{\Lc(0)}
    \longrightarrow 0,
\]
as $N\uparrow\infty$, $\epsilon=\epsilon(N)\downarrow0$,
for all $\psi$.

We first prove the convergence
of the ratio $\Lc(\psi)/\Lc(0)$.
Let $(U_{\beta,\epsilon,k})_{k\geq1}$ and
$(\phi_k)_{k\geq1}$ be the
components of $U_{\beta,\epsilon}$ and
$\phi$ with respect to the
eigenvectors $e_1,e_2,\dots$,
and set again 
$g^\epsilon_k\eqdef\lambda_k^{-m/2}\e^{-\epsilon\lambda_k}$.
By Plancherel, independence, and elementary Gaussian
integration,
\[
  \begin{aligned}
    \E_{U_{\beta\epsilon}}\Bigl[\prod_{j=1}^N B_{\epsilon j}^N(\phi)\Bigr]
      &=\E_{U_{\beta\epsilon}}\Bigl[\e^{-\frac1{2N}\sum_{j=1}^N
        \|\phi(\gamma_j,\cdot)+\gamma_j U_{\beta,\epsilon}\|_{L^2(\ell)}^2}
        \Bigr]\\
      &=\e^{-\frac12 M_N(\|\phi\|_{L^2(\ell)}^2)}
        \prod_{k=1}^\infty\E_{U_{\beta,\epsilon}}\Bigl[
        \e^{-\frac12(\Gamma_N U_{\beta,\epsilon,k}^2
          + 2M_N(\gamma\phi_k) U_{\beta,\epsilon,k})}
        \Bigr]\\
      &=\e^{-\frac12 M_N(\|\phi\|_{L^2(\ell)}^2)}
        \prod_{k=1}^\infty\Bigl(\frac1{(1+\beta\Gamma_N g^\epsilon_k)^{\frac12}}
        \e^{\frac{\beta g^\epsilon_k M_N(\gamma\phi_k)^2}
          {2(1+\beta\Gamma_N g^\epsilon_k)}}\Bigr).
\end{aligned}
\]
Thus we have
\[
  \Lc(\psi)
    = \Bigl(\prod_{k=1}^\infty\frac1{\sqrt{1+\beta\Gamma_\infty g^\epsilon_k}}\Bigr)
      \Lc_0(\psi),
\]
with
\begin{equation}\label{e:ellezero2}
  \begin{multlined}[.8\linewidth]
    \Lc_0(\psi)
      \eqdef\int\dots\int
        F_N(\Gamma_N-\Gamma_\infty)
        E_N(\psi)\cdot\\
        \cdot\e^{-\frac12 M_N(\|\phi\|_{L^2(\ell)}^2)}
        \e^{\frac12\beta\sum_{k=1}^\infty
        \frac{g^\epsilon_k M_N(\gamma\phi_k)^2}{1+\beta\Gamma_N g^\epsilon_k}}
        \,d\nu^{\otimes N},
  \end{multlined}
\end{equation}
and where $F_N$ has been defined in \eqref{e:effeenne}.
As in the previous proofs, it suffices to prove
that $\Lc_0(\psi)\to\e^{-\frac12\sigma_\infty(\psi)^2}$
as $N\uparrow\infty$ and $\epsilon=\epsilon(N)\downarrow0$,
for all $\psi$.
Since we know already by the proof of Theorem~\ref{t:chaos}
(see Section~\ref{s:chaos}) that $F_N$ verifies the
assumptions of Lemma~\ref{l:bernstein}, it is sufficient
to prove convergence in expectation of the other terms
in $\Lc_0(\psi)$. First,
\[
  \e^{-\frac12 M_N(\|\phi\|_{L^2(\ell)}^2)}
    \longrightarrow\e^{-\frac12\|\phi\|_{L^2(\nu\otimes\ell)}^2},
\]
and 
\[
  \e^{\frac12\beta\sum_{k=1}^\infty
      \frac{g^\epsilon_k M_N(\gamma\phi_k)^2}{1+\beta\Gamma_N g^\epsilon_k}}
    \longrightarrow\e^{\frac12\beta\sum_{k=1}^\infty
      \frac{G_{m,k}\nu(\gamma\phi_k)^2}{1+\beta\Gamma_\infty G_{m,k}}}
\]
converge {a.\,s.} and in $L^1$ by the strong law of
large numbers. The first term is obviously bounded,
the second is bounded by the computations
in \eqref{e:expolim}. Here we can pass to the limit
also in the sum using the smoothness of $\phi$.
Finally, by the central limit theorem for
{i.\,i.\,d.} random variables,
\begin{equation}\label{miniclt}
  E_N(\psi)
    \longrightarrow\e^{-\frac12(\nu(\ell(\psi)^2)-\nu\otimes\ell(\psi)^2)}.
\end{equation}
By recalling the explicit form of $\sigma_\infty(\psi)$
given in \eqref{e:sigmainfty}, we conclude that
$\Lc_0(\psi)$ converges to $\e^{-\frac12\sigma_\infty(\psi)^2}$.

We turn to the analysis of $\Ec(\psi)/\Lc(0)$.
By Lemma~\ref{l:approx} and formula \eqref{e:fourth},
\[
  \begin{aligned}
    \E_{U_{\beta\epsilon}}[|D_{\epsilon j}^N(\phi)|]
      &\lesssim\frac1{N^{3/2}}\E_{U_{\beta\epsilon}}\bigl[\bigl\|
        \phi(\gamma_j,\cdot) + \gamma_j U_{\beta\epsilon}
        \bigr\|_{L^3(\ell)}^3\bigr]\\
      &\lesssim\frac1{N^{3/2}}(1 + G_{m,\epsilon}(0,0)^{\frac32}),
  \end{aligned}
\]
therefore, as in the previous sections,
\[
  \Ec(\psi)
    \lesssim \frac1{\sqrt N}(1 + G_{m,\epsilon}(0,0)^{\frac32})\Ec_0,
\]
with $\Ec_0\to1$. In conclusion,
\[
  \frac{\Ec(\psi)}{\Lc(0)}
    \lesssim \frac1{\sqrt N}(1 + G_{m,\epsilon}(0,0)^{\frac32})
      \e^{\beta\Gamma_\infty G_{m,\epsilon}(0,0)}
      \frac{\Ec_0}{\Lc_0(0)}.
\]
As in formula~\eqref{e:condition}, by our assumption
on $\epsilon=\epsilon(N)$, the right-hand side converges
to $0$.
\bibliographystyle{amsalpha}

\begin{thebibliography}{CFMR05}

\bibitem[AC90]{AlbCru1990}
Sergio Albeverio and Ana~Bela Cruzeiro, \emph{Global flows with invariant
  ({G}ibbs) measures for {E}uler and {N}avier-{S}tokes two-dimensional fluids},
  Comm. Math. Phys. \textbf{129} (1990), no.~3, 431--444. \MR{1051499}

\bibitem[BG99]{BodGui1999}
Thierry Bodineau and Alice Guionnet, \emph{About the stationary states of
  vortex systems}, Ann. Inst. H. Poincar\'e Probab. Statist. \textbf{35}
  (1999), no.~2, 205--237. \MR{1678526}

\bibitem[BPP87]{BenPicPul1987}
G.~Benfatto, P.~Picco, and M.~Pulvirenti, \emph{On the invariant measures for
  the two-dimensional {E}uler flow}, J. Statist. Phys. \textbf{46} (1987),
  no.~3-4, 729--742. \MR{883549}

\bibitem[CCC{\etalchar{+}}12]{ChaConCorGanWu2012}
Dongho Chae, Peter Constantin, Diego C\'{o}rdoba, Francisco Gancedo, and
  Jiahong Wu, \emph{Generalized surface quasi-geostrophic equations with
  singular velocities}, Comm. Pure Appl. Math. \textbf{65} (2012), no.~8,
  1037--1066. \MR{2928091}

\bibitem[CCW11]{ChaConWu2011}
Dongho Chae, Peter Constantin, and Jiahong Wu, \emph{Inviscid models
  generalizing the two-dimensional {E}uler and the surface quasi-geostrophic
  equations}, Arch. Ration. Mech. Anal. \textbf{202} (2011), no.~1, 35--62.
  \MR{2835862}

\bibitem[CFMR05]{CorFonManRod2005}
Diego C\'{o}rdoba, Marco~A. Fontelos, Ana~M. Mancho, and Jose~L. Rodrigo,
  \emph{Evidence of singularities for a family of contour dynamics equations},
  Proc. Natl. Acad. Sci. USA \textbf{102} (2005), no.~17, 5949--5952.
  \MR{2141918}

\bibitem[CFR04]{CorFefRod2004}
Diego C\'{o}rdoba, Charles Fefferman, and Jos\'{e}~Luis Rodrigo, \emph{Almost
  sharp fronts for the surface quasi-geostrophic equation}, Proc. Natl. Acad.
  Sci. USA \textbf{101} (2004), no.~9, 2687--2691. \MR{2036970}

\bibitem[CGSI17]{CorGomIon2017}
Diego C{\'o}rdoba, Javier G{\'o}mez-Serrano, and Alexandru~D. Ionescu,
  \emph{Global solutions for the generalized {SQG} patch equation},
  \arxiv{1705.10842}, 2017.

\bibitem[CLMP92]{CagLioMarPul1992}
E.~Caglioti, P.-L. Lions, C.~Marchioro, and M.~Pulvirenti, \emph{A special
  class of stationary flows for two-dimensional {E}uler equations: a
  statistical mechanics description}, Comm. Math. Phys. \textbf{143} (1992),
  no.~3, 501--525. \MR{1145596}

\bibitem[CLMP95]{CagLioMarPul1995}
\bysame, \emph{A special class of stationary flows for two-dimensional {E}uler
  equations: a statistical mechanics description. {II}}, Comm. Math. Phys.
  \textbf{174} (1995), no.~2, 229--260. \MR{1362165}

\bibitem[CMT94]{ConMajTab1994}
Peter Constantin, Andrew~J. Majda, and Esteban Tabak, \emph{Formation of strong
  fronts in the {$2$}-{D} quasigeostrophic thermal active scalar}, Nonlinearity
  \textbf{7} (1994), no.~6, 1495--1533. \MR{1304437}

\bibitem[EK86]{EthKur1986}
Stewart~N. Ethier and Thomas~G. Kurtz, \emph{Markov processes}, Wiley Series in
  Probability and Mathematical Statistics: Probability and Mathematical
  Statistics, John Wiley \& Sons, Inc., New York, 1986, Characterization and
  convergence. \MR{838085}

\bibitem[ES93]{EyiSpo1993}
G.~L. Eyink and H.~Spohn, \emph{Negative-temperature states and large-scale,
  long-lived vortices in two-dimensional turbulence}, J. Statist. Phys.
  \textbf{70} (1993), no.~3-4, 833--886. \MR{1203717}

\bibitem[FR83]{FroRue1982}
J\"urg Fr\"ohlich and David Ruelle, \emph{Statistical mechanics of vortices in
  an inviscid two-dimensional fluid}, Comm. Math. Phys. \textbf{87} (1982/83),
  no.~1, 1--36. \MR{680646}

\bibitem[FS18]{FlaSaa2018}
Franco Flandoli and Martin Saal, in preparation, 2018.

\bibitem[GR18]{GroRom2018}
Francesco Grotto and Marco Romito, in preparation, 2018.

\bibitem[HPGS95]{HelPieGarSwa1995}
Isaac~M. Held, Raymond~T. Pierrehumbert, Stephen~T. Garner, and Kyle~L.
  Swanson, \emph{Surface quasi-geostrophic dynamics}, Journal of Fluid
  Mechanics \textbf{282} (1995), 1--20.

\bibitem[HPL94]{HelPieSwa1994}
Isaac~M. Held, Raymond~T. Pierrehumbert, and Swanson~Kyle L., \emph{Spectra of
  local and nonlocal two-dimensional turbulence}, Chaos, Solitons {\&} Fractals
  \textbf{4} (1994), no.~6, 1111--1116, Special Issue: Chaos Applied to Fluid
  Mixing.

\bibitem[JM73]{JoyMon1973}
Glenn Joyce and David Montgomery, \emph{Negative temperature states for the
  two-dimensional guiding-centre plasma}, Journal of Plasma Physics \textbf{10}
  (1973), no.~1, 107--121.

\bibitem[Kie93]{Kie1993}
Michael K.-H. Kiessling, \emph{Statistical mechanics of classical particles
  with logarithmic interactions}, Comm. Pure Appl. Math. \textbf{46} (1993),
  no.~1, 27--56. \MR{1193342}

\bibitem[KW12]{KieWan2012}
Michael K.-H. Kiessling and Yu~Wang, \emph{Onsager's ensemble for point
  vortices with random circulations on the sphere}, J. Stat. Phys. \textbf{148}
  (2012), no.~5, 896--932. \MR{2972859}

\bibitem[Lio98]{Lio1998}
Pierre-Louis Lions, \emph{On {E}uler equations and statistical physics},
  Cattedra Galileiana. [Galileo Chair], Scuola Normale Superiore, Classe di
  Scienze, Pisa, 1998. \MR{1657480}

\bibitem[Mar08]{Mar2008}
Fabien Marchand, \emph{Existence and regularity of weak solutions to the
  quasi-geostrophic equations in the spaces {$L^p$} or {$\dot H^{-1/2}$}},
  Comm. Math. Phys. \textbf{277} (2008), no.~1, 45--67. \MR{2357424}

\bibitem[MP94]{MarPul1994}
Carlo Marchioro and Mario Pulvirenti, \emph{Mathematical theory of
  incompressible nonviscous fluids}, Applied Mathematical Sciences, vol.~96,
  Springer-Verlag, New York, 1994. \MR{1245492}

\bibitem[Ner04]{Ner2004}
Cassio Neri, \emph{Statistical mechanics of the {$N$}-point vortex system with
  random intensities on a bounded domain}, Ann. Inst. H. Poincar\'e Anal. Non
  Lin\'eaire \textbf{21} (2004), no.~3, 381--399. \MR{2068307}

\bibitem[Ner05]{Ner2005}
\bysame, \emph{Statistical mechanics of the {$N$}-point vortex system with
  random intensities on {$\mathbb{R}^2$}}, Electron. J. Differential Equations
  (2005), No. 92, 26. \MR{2162253}

\bibitem[NPST17]{NahPavStaTot2017}
Andrea Nahmod, Natasa Pavlovic, Gigliola Staffilani, and Nathan Totz,
  \emph{Global flows with invariant measures for the inviscid modified {SQG}
  equations}, \arxiv{1705.01890}, 2017.

\bibitem[Ons49]{Ons1949}
L.~Onsager, \emph{Statistical hydrodynamics}, Nuovo Cimento (9) \textbf{6}
  (1949), no.~Supplemento, 2 (Convegno Internazionale di Meccanica Statistica),
  279--287. \MR{0036116}

\bibitem[Res95]{Res1995}
Serge~G. Resnick, \emph{Dynamical problems in non-linear advective partial
  differential equations}, ProQuest LLC, Ann Arbor, MI, 1995, Thesis
  (Ph.D.)--The University of Chicago. \MR{2716577}

\bibitem[Rod05]{Rod2005}
Jos\'{e}~Luis Rodrigo, \emph{On the evolution of sharp fronts for the
  quasi-geostrophic equation}, Comm. Pure Appl. Math. \textbf{58} (2005),
  no.~6, 821--866. \MR{2142632}

\bibitem[Sch96]{Sch1996}
Steven Schochet, \emph{The point-vortex method for periodic weak solutions of
  the 2-{D} {E}uler equations}, Comm. Pure Appl. Math. \textbf{49} (1996),
  no.~9, 911--965. \MR{1399201}

\end{thebibliography}

\end{document}